\newtheorem{theorem}{Theorem}[section]
\theoremstyle{plain}
\newtheorem{proposition}{Proposition}[section]
\numberwithin{equation}{section}
\begin{document}
\title[Singular solutions]{Singular solutions to special Lagrangian equations with subcritical phases and
minimal surface systems}
\author{Dake WANG}
\author{Yu YUAN}
\address{Department of Mathematics, Box 354350\\
University of Washington\\
Seattle, WA 98195}
\email{dkpool@math.washington.edu, yuan@math.washington.edu}
\thanks{Both authors are partially supported by an NSF grant.}
\date{\today}

\begin{abstract}
We construct singular solutions to special Lagrangian equations with
subcritical phases and minimal surface systems. A priori estimate breaking
families of smooth solutions are also produced correspondingly. A priori
estimates for special Lagrangian equations with certain convexity are largely
known by now.

\end{abstract}
\maketitle

\section{\bigskip Introduction}

In this paper, we construct \emph{singular} solutions to the special
Lagrangian equation%

\begin{equation}
\sum_{i=1}^{n}\arctan\lambda_{i}=\Theta\label{EsLag}%
\end{equation}
with subcritical phase $\left\vert \Theta\right\vert <\left(  n-2\right)
\pi/2,$ where $\lambda_{i}$ are the eigenvalues of $D^{2}u,$ and the minimal
surface system for $k$-vector valued functions of $n$-variables
\begin{equation}
\bigtriangleup_{g}U=\sum_{i,j=1}^{n}\frac{1}{\sqrt{g}}\partial_{x_{i}}\left(
\sqrt{g}g^{ij}\partial_{x_{i}}U\right)  =0, \label{MSS}%
\end{equation}
where the induced metric
\[
g=I+\left(  DU\right)  ^{T}DU.
\]
Equation (\ref{EsLag}) is the potential equation for (\ref{MSS}) with
solutions $U=Du.$ The Lagrangian graph $\left(  x,Du\left(  x\right)  \right)
\subset\mathbb{R}^{n}\times\mathbb{R}^{n}$ is called special and in fact
volume minimizing when the phase or the argument of the complex number
$\left(  1+\sqrt{-1}\lambda_{1}\right)  \cdots\left(  1+\sqrt{-1}\lambda
_{n}\right)  $ is constant $\Theta,$ or equivalently $u$ satisfies equation
(\ref{EsLag}); see the work [HL1, Theorem 2.3, Proposition 2.17] by Harvey and
Lawson. The phase $\left(  n-2\right)  \pi/2$ is said critical because the
level set $\left\{  \left.  \lambda\in\mathbb{R}^{n}\right\vert \lambda
\ \text{satisfying }(\ref{EsLag})\right\}  $ is convex \emph{only} when
$\left\vert \Theta\right\vert \geq\left(  n-2\right)  \pi/2$ [Y2, Lemma 2.1].
In dimension three, when $\left\vert \Theta\right\vert =$ $\pi/2$ or
$\left\vert \Theta\right\vert =0,\ \pi,$ equation (\ref{EsLag}) also takes the
quadratic and cubic algebraic forms respectively%
\begin{equation}
\sigma_{2}\left(  D^{2}u\right)  =\lambda_{1}\lambda_{2}+\lambda_{2}%
\lambda_{3}+\lambda_{3}\lambda_{1}=1 \label{s2}%
\end{equation}
or%
\begin{equation}
\bigtriangleup u=\det D^{2}u. \label{Lap=MA}%
\end{equation}

We state our first main result.

\begin{theorem}
There exist $C^{1,1/\left(  2m-1\right)  }$ ($m=2,3,4,\cdots$) viscosity
solutions $u^{m}$ to (\ref{EsLag}) with $n=3$ and each $\Theta\in\left(
-\frac{\pi}{2},\frac{\pi}{2}\right)  ,$ such that $u^{m}\in C^{1,1/\left(
2m-1\right)  }\left(  B_{1}\right)  \cap C^{\infty}\left(  B_{1}%
\backslash\{0\}\right)  $ for $B_{1}\subset\mathbb{R}^{3}$ but $u^{m}\notin
C^{1,\delta}$ for any $\delta>1/\left(  2m-1\right)  .$
\end{theorem}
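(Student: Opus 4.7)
The plan is to construct $u^m$ by exhibiting an explicit singular function on $B_1\subset\mathbb{R}^3$. The regularity target suggests a self-similar leading-order behavior at the origin with homogeneity exponent $\gamma_m:=1+1/(2m-1)=2m/(2m-1)$. If $u^m(x)=|x|^{\gamma_m}\Psi(x/|x|)+\text{lower order}$ with $\Psi$ smooth on $S^2$, then $u^m$ is automatically $C^\infty$ on $B_1\setminus\{0\}$ and satisfies $|Du^m(x)|\le C|x|^{\gamma_m-1}$, yielding $u^m\in C^{1,1/(2m-1)}(B_1)$; sharpness of the exponent follows from the homogeneity: if $u^m\in C^{1,\delta}$ with $\delta>\gamma_m-1$, then $Du^m(x/|x|)$ must vanish along every ray, forcing the profile $\Psi$ to be trivial.

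The crux is the construction of the profile. Inserting the pure self-similar ansatz into (\ref{EsLag}), the three eigenvalues of $D^2u^m$ take the form $|x|^{\gamma_m-2}\tilde\mu_i(\omega)$, and requiring $\sum\arctan(|x|^{\gamma_m-2}\tilde\mu_i(\omega))=\Theta$ for every $|x|>0$ is equivalent (by differentiating in $|x|$ and clearing denominators in the identity $\sum_i\tilde\mu_i\prod_{j\neq i}(1+\eta^2\tilde\mu_j^2)=0$) to the algebraic constraints $\sigma_1(\tilde\mu)=\sigma_3(\tilde\mu)=0$ on $S^2$ together with $\Theta=0$. Thus the ansatz lives on a degenerate branch of $\{\sum\arctan\lambda_i=0\}$ on which one angular eigenvalue vanishes and the remaining two cancel; such a branch exists precisely because of the non-convexity of the subcritical level sets, and disappears for $|\Theta|\ge(n-2)\pi/2$. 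To reach an arbitrary subcritical phase, I would modify $u^m_0$ by a Lewy-type rotation of the Lagrangian graph $(x,Du^m_0(x))$ through angle $\Theta/3$ in each complex factor of $\mathbb{R}^3\times\mathbb{R}^3$: this is an isometry of the ambient space that preserves the special Lagrangian condition and shifts the phase by $3\cdot(\Theta/3)=\Theta$, so the rotated graph descends to a potential $u^m$ with phase $\Theta$ whenever the projection onto the first factor remains a diffeomorphism.

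For the baseline $\Theta=0$ construction I would seek $\Psi_m$ with rotational symmetry about a coordinate axis, reducing $\sigma_1(\tilde\mu)=\sigma_3(\tilde\mu)=0$ to a pair of ODEs in one angular variable; their compatibility and smooth solutions indexed by $m\ge 2$ (via an oscillation count of the profile) should be extracted by a phase-plane analysis. The remaining verifications are then routine: $u^m$ is a classical solution on $B_1\setminus\{0\}$ by construction, and the sign change of $\Psi_m$ (forced by $\sigma_1=0$) prevents any $C^2$ test function from touching $u^m$ from either side at the origin, so both viscosity inequalities hold vacuously there. The main technical obstacles are the phase-plane analysis producing the discrete family $\Psi_m$, and the verification of the Lewy-rotation step---in particular showing that the invertibility of $\cos(\Theta/3)\,I-\sin(\Theta/3)\,D^2u^m_0$ holds on $B_1\setminus\{0\}$, so that the rotated Lagrangian globally remains a graph and inherits the sharp $C^{1,1/(2m-1)}$ behavior at its singular point.
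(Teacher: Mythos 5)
Your plan fails at the very first step: the homogeneous ansatz $u^m(x)=|x|^{\gamma_m}\Psi(x/|x|)$ has no nontrivial solution. You correctly reduce the requirement that $\sum_i\arctan\bigl(\eta\tilde\mu_i(\omega)\bigr)=\Theta$ for all $\eta>0$ to the pointwise conditions $\sigma_1(\tilde\mu)=\sigma_3(\tilde\mu)=0$ on $S^2$ (and $\Theta=0$). But $\sigma_1(D^2u^m)=\Delta u^m$, so the first condition says $u^m$ is \emph{harmonic} on $B_1\setminus\{0\}$. Plugging $u^m=r^{\gamma_m}\Psi(\omega)$ into the Laplacian gives $\Delta_{S^2}\Psi=-\gamma_m(\gamma_m+1)\Psi$, and since the spectrum of $\Delta_{S^2}$ is $\{-\ell(\ell+1):\ell\in\mathbb{Z}_{\ge 0}\}$, a smooth eigenfunction requires $\gamma_m\in\{\ell,-\ell-1\}$. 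With $\gamma_m=2m/(2m-1)\in(1,2)$ this is impossible for every $m\ge 2$. Adding ``lower order'' corrections does not help: the leading-order profile must still be harmonic and degenerate, and this already has no smooth angular solution. So there is no $\Psi_m$ for your phase-plane analysis to find.

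The deeper point is that the true singular solutions are \emph{not} asymptotically self-similar, and the $C^{1,1/(2m-1)}$ exponent does not arise from a homogeneity of the potential. In the paper one first solves the critical-phase equation $\sigma_2(D^2u)=1$ near a hand-crafted polynomial datum by Cauchy--Kowalevskaya; the resulting analytic solution $u$ has two eigenvalues $\approx 1$ and one eigenvalue $\lambda_3\approx -|x|^{2m-2}$, with $|Du(x)|$ pinched between $|x|^{2m-1}$ and $|x|$ depending on the direction (Properties 2.2 and 2.4). A horizontal $U(3)$ rotation (in the $z_1,z_2$ factors only) then shifts to the desired phase while keeping this structure, and the crucial singularity is created by the ``inversion'' $\pi/2$ rotation (Proposition 3.1), which is a Legendre-type transform turning the small eigenvalue into a large one and the gradient map into the new independent variable. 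Along the image of the $x_3$-axis the new gradient behaves like $|\tilde{\tilde x}|^{1/(2m-1)}$ while along generic directions it is Lipschitz --- a genuinely anisotropic, non-homogeneous behaviour, whose sharp H\"older modulus is exactly the inverse power coming from $|Du(x)|\sim|x|^{2m-1}$. This mechanism (a smooth solution with a degenerate direction, rendered singular by the graph rotation) is orthogonal to the self-similar construction you propose.

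Two secondary points. First, your claim that ``both viscosity inequalities hold vacuously at the origin'' must be checked: in the paper's solution, quadratics \emph{can} touch $\tilde{\tilde u}$ from below at the origin (only touching from above is excluded), and the subsolution inequality is verified explicitly using Property~3.3. Second, justifying that the rotated Lagrangian remains a global graph is precisely the hard analytic content; the paper handles it by proving monotonicity of a fiber map after a change of variables using the uniform convexity of the $2\times2$ minor (Step~2.1 of Proposition~3.1), which is considerably more delicate than ``invertibility of $\cos(\Theta/3)I-\sin(\Theta/3)D^2u^m_0$.''
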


Rotating forth and back, we obtain our second (\textquotedblleft
smooth\textquotedblright) result.

\begin{theorem}
There exist a family of smooth solutions $u^{\varepsilon}$ to (\ref{EsLag}) in
$B_{1}\subset\mathbb{R}^{3}$ with $n=3$ and each fixed $\Theta\in\left(
-\frac{\pi}{2},\frac{\pi}{2}\right)  $ such that%
\[
\left\Vert Du^{\varepsilon}\right\Vert _{L^{\infty}\left(  B_{1}\right)  }\leq
C\ \ \ \text{but\ \ }\left\vert D^{2}u^{\varepsilon}\left(  0\right)
\right\vert \rightarrow\infty\ \ \text{as }\varepsilon\rightarrow0.
\]

\end{theorem}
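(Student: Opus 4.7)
The plan is to use a \emph{rotate, perturb, rotate back} scheme, exploiting the $U(3)$--invariance of (\ref{EsLag}). Recall that a $U(n)$--rotation $R_\theta$ by angle $\theta$, acting simultaneously on each coordinate pair $(x_i,y_i)$ of $\mathbb{R}^n\times\mathbb{R}^n$, sends a Lagrangian graph $\{(x,Du(x))\}$ of phase $\Theta$ to another Lagrangian submanifold; wherever the latter is still a graph, it writes as $\{(\bar x, D\bar u(\bar x))\}$ with phase $\Theta-n\theta$, and the Hessian eigenvalues transform via $\arctan\bar\lambda_i = \arctan\lambda_i - \theta$.

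Starting from a singular solution $u^m$ from Theorem 1.1 with subcritical phase $\Theta\in(-\pi/2,\pi/2)$, I would pick $\theta>0$ so that the rotated phase lies in the critical or supercritical (convex) range, i.e., $|\cdot|\geq\pi/2$ after the appropriate branch accounting. Since near the origin the eigenvalues $\lambda_i$ of $D^2u^m$ diverge with $\arctan\lambda_i$ accumulating at $\pm\pi/2$, after the rotation the corresponding $\arctan\bar\lambda_i$ are bounded away from $\pm\pi/2$, so $D^2\bar u^m$ is bounded and $\bar u^m\in C^{1,1}$. The known a priori estimates for SLE with convex phase, referred to in the abstract, then upgrade $\bar u^m$ to a smooth (in fact analytic) solution on a rotated domain $\bar\Omega$.

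Next, perturb: construct a family $\bar u^{m,\varepsilon}$ of smooth solutions of SLE with the same convex phase on $\bar\Omega$, for instance by solving the Dirichlet problem with smooth boundary data close to $\bar u^m|_{\partial\bar\Omega}$, with $\bar u^{m,\varepsilon}\to\bar u^m$ uniformly as $\varepsilon\to0$ and uniform $C^2$ bounds on compact subsets away from the image $\bar x_0$ of the origin. Finally apply $R_{-\theta}$: for each $\varepsilon>0$ small, no eigenvalue of $D^2\bar u^{m,\varepsilon}$ equals $\cot\theta$ exactly (the values merely approach $\cot\theta$ at $\bar x_0$), so the rotated object is a smooth Lagrangian graph $\{(x,Du^\varepsilon(x))\}$ with $u^\varepsilon$ smoothly solving (\ref{EsLag}) at phase $\Theta$. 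The explicit formula $Du^\varepsilon(x)=\sin\theta\cdot\bar x+\cos\theta\cdot D\bar u^{m,\varepsilon}(\bar x)$ together with uniform Lipschitz bounds on the $\bar u^{m,\varepsilon}$ yields $\|Du^\varepsilon\|_{L^\infty(B_1)}\leq C$. On the other hand, at the origin $\arctan\lambda_i^\varepsilon(0)=\arctan\bar\lambda_i^\varepsilon(\bar x_0)+\theta$ tends to $\pm\pi/2$ as $\varepsilon\to0$, so $|D^2u^\varepsilon(0)|\to\infty$.

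The main obstacle will be controlling the backward rotation as $\varepsilon\to0$: the Jacobian $\cos\theta\cdot I-\sin\theta\cdot D^2\bar u^{m,\varepsilon}(\bar x)$ of the map $\bar x\mapsto x$ degenerates at $\bar x_0$ in the limit, which is precisely the mechanism for Hessian blowup but also the mechanism that could shrink the $x$--domain. One must verify that for each fixed small $\varepsilon$ the Jacobian is nonzero throughout $\bar\Omega$ so that $u^\varepsilon$ is smoothly defined on a fixed ball $B_1\subset\mathbb{R}^3$ (possibly after rescaling), and that the gradient bound is genuinely uniform in $\varepsilon$. A secondary issue is the precise branch-counting when choosing $\theta$, which depends on the sign pattern of the divergent eigenvalues of $D^2u^m$ at the origin, so that the rotated phase lands in a convex regime where the supercritical or critical a priori estimates apply.
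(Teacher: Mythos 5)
Your approach is genuinely different from the paper's, and it has a real gap precisely at the spot you flag as ``the main obstacle.'' Let me make the difficulty concrete and then describe what the paper actually does.

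You rotate the singular $u^m$ by a fixed angle $\theta$ so that the phase becomes critical/supercritical, argue that the rotated $\bar u^m$ is $C^{1,1}$ and hence smooth, perturb it by solving the Dirichlet problem with nearby smooth boundary data, and then rotate back by $-\theta$. The back-rotation produces a graph only where the Jacobian $\cos\theta\,I - \sin\theta\,D^2\bar u^{m,\varepsilon}$ is nondegenerate, i.e.\ only where $\lambda_{\max}\bigl(D^2\bar u^{m,\varepsilon}(\bar x)\bigr) < \cot\theta$. The unperturbed $\bar u^m$ achieves equality $\lambda_{\max} = \cot\theta$ at the interior point $\bar x_0$ (image of the origin), and $\lambda_{\max} < \cot\theta$ elsewhere. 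Now the issue is not merely that ``no eigenvalue equals $\cot\theta$ exactly'' for the perturbed solution --- you need the strict one-sided inequality $\lambda_{\max}\bigl(D^2\bar u^{m,\varepsilon}\bigr) < \cot\theta$ throughout the domain. Nothing in the Dirichlet perturbation scheme forces this: the Hessian of the solution at an interior point is not monotone in the boundary data (indeed, as the paper emphasizes, the very existence of these families demonstrates the failure of a Hessian maximum principle for (\ref{EsLag}) in dimension three), so a generic nearby boundary datum can just as well push $\lambda_{\max}$ above $\cot\theta$ on a small set near $\bar x_0$, at which point the back-rotated object is no longer a graph there. You would need to exhibit a specific perturbation and prove this one-sided bound, and there is no obvious mechanism to do so in the supercritical Dirichlet setting. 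A second, quieter gap is the uniformity of the $x$-domain and the gradient bound as $\varepsilon\to 0$; you state these as things ``one must verify,'' but they require the same kind of explicit quantitative control that makes the construction delicate.

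The paper's route avoids both gaps by never touching a Dirichlet problem and never perturbing a fixed solution. Instead, it perturbs the \emph{rotation parameter}: in the first (``horizontal'') $U(3)$ rotation applied to the Cauchy--Kowalevskaya solution $u$, it takes the angle $\alpha = \tfrac{\Theta}{2} - \tfrac{3\varepsilon}{2}$ rather than $\tfrac{\Theta}{2}$, so after the $\pi/2$ rotation the resulting (still singular) potential $\tilde{\tilde u}^\varepsilon$ has phase $-\Theta+3\varepsilon$ and a third eigen-angle $\tilde{\tilde\theta}_3 = \tfrac{\pi}{2} - |o(1)|$ that is strictly below $\pi/2$ away from the origin by an explicitly estimated amount (equation (\ref{mismatchpower}), which crucially relies on Property~2.4, the lower gradient bound). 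The final small $U(3)$ rotation by $\varepsilon$ then lowers $\tilde{\tilde\theta}_3$ to $\tfrac{\pi}{2} - \varepsilon - |o(1)|$, which is strictly below $\pi/2$ \emph{everywhere}, including at the origin; the eigen-angles are now uniformly separated from $\pm\tfrac{\pi}{2}$, so the submanifold is a smooth graph, the phase is exactly $-\Theta$, and the Hessian at the origin blows up like $\cot\varepsilon$ as $\varepsilon\to 0$. Because the perturbation rides along with the rotations, the graph condition is verified by a direct distance-expansion estimate (Step 3 of Section 4) and the domain radius $\tfrac12\tilde{\tilde r}_\Theta$ is uniform in $\varepsilon$ by construction. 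In short: the paper constructs the degenerating family inside the rotation machinery, whereas you attempt to construct it by an external PDE perturbation and then feed it back into the machinery, which is where the control is lost.
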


For each $u^{\varepsilon}$ with small $\varepsilon$ fixed in Theorem 1.2, the
Hessian $\left\vert D^{2}u^{\varepsilon}\left(  0\right)  \right\vert $ (in
the max eigenvalue norm) is strictly larger that its nearby values in the
three dimensional domain of the solution to a now uniformly elliptic equation
(\ref{EsLag}). (It can be seen by Property 2.4 in Section 2 and tracing the
eigenvalue dependency in Section 4.) This violates the maximum principle. In
contrast to the two dimensional fully nonlinear uniformly elliptic equations,
it is classically known that the Hessian of any solution enjoys the maximum
principle. To the solutions in the above two theorems, by adding quadratics of
extra variables in higher dimensions $n\geq4,$ we immediately get the
corresponding counterexamples for (\ref{EsLag}) with all subcritical phases
$\left\vert \Theta\right\vert <\left(  n-2\right)  \pi/2.$ Furthermore, we
convert our counterexamples to the ones for minimal surface system (\ref{MSS}).

\begin{theorem}
There exist a family of weak solutions $U^{m}$ to (\ref{MSS}) in $B_{1}%
\subset\mathbb{R}^{3}$ with $n=3$, $k=3,$ and $m=2,3,4,\cdots$ such that%
\[
U^{m}\in W^{1,p}\left(  B_{1}\right)  \ \text{\ for any }p<\frac{2m+1}%
{2m-2}\ \ \text{but\ \ }U^{m}\notin W^{1,\frac{2m+1}{2m-2}}\left(
B_{1}\right)  .
\]
Furthermore, there exist a family of smooth solutions $U^{\varepsilon}$ to
(\ref{MSS}) in $B_{1}\subset\mathbb{R}^{3}$ with $n=3$ and $k=3$ such that%
\[
\left\Vert U^{\varepsilon}\right\Vert _{L^{\infty}\left(  B_{1}\right)  }\leq
C\ \ \ \text{but\ \ }\left\vert DU^{\varepsilon}\left(  0\right)  \right\vert
\rightarrow\infty\ \ \text{as }\varepsilon\rightarrow0.
\]

\end{theorem}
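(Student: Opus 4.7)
The plan is to deduce both parts directly from Theorems~1.1 and~1.2 via the potential relation $U=Du$, which sends any solution of (\ref{EsLag}) to a solution of (\ref{MSS}). Setting
\[
U^{m}:=Du^{m}\qquad\text{and}\qquad U^{\varepsilon}:=Du^{\varepsilon},
\]
with $u^{m},u^{\varepsilon}$ furnished by the preceding two theorems, the resulting maps $U^{m},U^{\varepsilon}:B_{1}\to\mathbb{R}^{3}$ match the required $n=k=3$.

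The smooth half is immediate from Theorem~1.2: $U^{\varepsilon}\in C^{\infty}(B_{1})$ classically solves (\ref{MSS}), and $\|U^{\varepsilon}\|_{L^{\infty}}=\|Du^{\varepsilon}\|_{L^{\infty}}\leq C$ with $|DU^{\varepsilon}(0)|=|D^{2}u^{\varepsilon}(0)|\to\infty$. For the singular half, two further tasks arise. First, I would verify that $U^{m}$ is a weak solution of (\ref{MSS}) on all of $B_{1}$, not only on $B_{1}\setminus\{0\}$ where it is classical by Theorem~1.1. Since $U^{m}\in L^{\infty}$ (from $u^{m}\in C^{1}$), a standard cutoff/removable-singularity argument across the isolated singular point should suffice: the boundary contribution on $\partial B_{r}$ vanishes as $r\to 0$ because the surface measure is $O(r^{2})$ and $U^{m}$ is bounded, while the bulk identity passes to the limit by dominated convergence using the local integrability of the flux $\sqrt{g}\,g^{ij}\partial_{j}U^{m}$, secured by the Sobolev control obtained in the next step.

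Second, I would pin down the sharp $W^{1,p}$ threshold. Because $U^{m}\in L^{\infty}$, this reduces to the $L^{p}$ integrability of $|D^{2}u^{m}|$. Returning to the explicit construction underlying Theorem~1.1, I would extract the precise near-origin behavior of $|D^{2}u^{m}|$ and integrate in polar-type coordinates on $B_{1}\subset\mathbb{R}^{3}$ to read off the critical exponent $p=(2m+1)/(2m-2)$.

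The main obstacle is this last computation. The Hölder regularity $C^{1,1/(2m-1)}$ from Theorem~1.1 does not alone determine the $L^{p}$ integrability of the Hessian, and the threshold $(2m+1)/(2m-2)$ does not come from a naive radial-homogeneous scaling consistent with the exponent $1/(2m-1)$; the computation must therefore reflect the fine directional structure of $D^{2}u^{m}$, accessible only through the explicit construction. A subsidiary subtlety is that for $m\geq 3$ the threshold lies below $2$, so $|D^{2}u^{m}|\notin L^{2}_{\mathrm{loc}}$, and the quasi-linear operator $\Delta_{g}$ must be understood distributionally via the (barely) integrable flux rather than through the standard energy formulation.
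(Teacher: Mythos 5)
Your proposal follows the same global strategy as the paper: set $U^m = Du^m$ and $U^\varepsilon = Du^\varepsilon$, note the smooth half is immediate, and reduce the singular half to (a) showing $U^m$ is a weak solution across the origin and (b) computing the $L^p$ integrability threshold of $DU^m = D^2u^m$. However, you leave the two genuinely hard steps undone, and your sketch of (a) contains a misstatement.

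On the threshold: integrating $|D^2u^m(y)|^p$ directly in polar coordinates in the $y$-variable is not tractable, because Properties~2.3--2.4 give only the anisotropic two-sided bound $\delta_3 r^{2m-1}\le |Du^m|\le\delta_4 r$, so $|D^2u^m(y)|\approx |Du^m(y)|^{-(2m-2)}$ is not a clean power of $|y|$. The paper's computation instead changes variables to $x=Du^m(y)$, where $|DU^m|\approx |x|^{-(2m-2)}$ is isotropic and the Jacobian factor is $|\det(D^2u^m)^{-1}|\approx |x|^{2m-2}$ (since $\lambda_1,\lambda_2$ are bounded and $\lambda_3\approx -|x|^{-(2m-2)}$); this yields $\int |x|^{(2m-2)(1-p)}\,dx<\infty$ iff $(2m-2)(p-1)<3$, i.e. $p<\frac{2m+1}{2m-2}$. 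That substitution is the idea your sketch is missing.

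On weak solvability: your claim that the boundary term on $\partial B_r$ vanishes ``because the surface measure is $O(r^2)$ and $U^m$ is bounded'' is not correct — the boundary flux is $\sqrt{g}\,g^{ij}\partial_i U\,\nu_j\,\Phi$, which involves $DU^m$, not $U^m$; boundedness of $U^m$ is irrelevant. More importantly, $L^1$-integrability of the flux does not follow simply from $|D^2u^m|\in L^1$ and the Sobolev control you invoke. Written in the eigenbasis, $\sqrt{g}\,g^{ii}\partial_i U^i = \sqrt{\prod_j(1+\lambda_j^2)}\,\frac{\lambda_i}{1+\lambda_i^2}$, and a naive estimate gives a bound of order $|D^2u^m|^2$, which is \emph{not} in $L^1$ once $m\ge 3$ (since $\frac{2m+1}{2m-2}<2$). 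The paper's argument crucially uses the eigenvalue structure supplied by Property~2.2/3.3 — exactly two eigenvalues stay bounded and one blows up — so that $\sqrt{g}\approx|\lambda_3|$ and each flux component is $\lesssim |\lambda_3|\approx |D^2u^m|\in L^1$. Without this cancellation observation, the distributional step you describe would not close for $m\ge 3$.
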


The vector valued functions $U^{m}$ are taken as $Du^{m}$ with $u^{m}$ from
Theorem 1.1, thus the first part of the theorem gives a negative answer to
Nadirashvili's question whether there is an $\varepsilon$ improvement of
$W^{2,1}$ solutions to special Lagrangian equation (\ref{EsLag}) in general.
We are grateful for this question. In terms of minimal surface system
(\ref{MSS}), the question would be whether there is an $\varepsilon$
improvement of $W^{1,1}$ solutions.

For special Lagrangian equation (\ref{EsLag}) with critical and supercritical
phases $\left\vert \Theta\right\vert \geq\left(  n-2\right)  \pi/2$ in
dimension two and three, with very large phase $\left\vert \Theta\right\vert
\geq\left(  n-1\right)  \pi/2$ in general dimensions, a priori Hessian and
gradient estimates, and consequently, armed with the solvability of the
Dirichlet problem with smooth boundary data to the now convex special
Lagrangian equation (\ref{EsLag}) in the critical and supercritical phase
cases, the regularity of $C^{0}$ viscosity solutions were derived in [WY1]
[WY2] [WY3] [CWY]. In passing, we also mention that the existence and
uniqueness of the $C^{0}$ viscosity solution for the Dirichlet problem to
strictly elliptic equation (\ref{EsLag}) is known (cf. [CWY, p. 594]). In
recent years, there has been a new approach toward the existence and
uniqueness of $C^{0}$ viscosity or weak solution for the Dirichlet problem to
strictly elliptic as well as degenerate elliptic fully nonlinear equations by
Harvey and Lawson [HL2] [HL3].

Recently Nadirashvili and Vl\u{a}duc\c{t} [NV] constructed beautiful
$C^{1,1/3}$ singular viscosity solutions to (\ref{EsLag}) with subcritical
phases $\left\vert \Theta\right\vert <\pi/2$ in dimension three, relying on
\textquotedblleft brutal force\textquotedblright\ calculations (for the
approximate solutions) and a hard and deep topological result in [EL] (for the
injectivity of the gradient maps).

For minimal surface equations, namely (\ref{MSS}) with $k=1,$ the gradient
estimate in terms of the height of the minimal surfaces, is the classic result
by Bombieri-De Giorgi-Miranda [BDM], from which it follows the regularity of
weak or viscosity solutions. For smooth solutions to (\ref{MSS}) with $n=2,$
Gregori [G] extended Heinz's Jacobian estimate to get a gradient bound in
terms of the heights of the two dimensional minimal surfaces with any
codimension. For smooth solutions to general minimal surface system
(\ref{MSS}) with certain constraints on the gradients themselves, a gradient
estimate was obtained by Wang [W], using an integral method developed for
codimension one minimal graphs. Nonetheless, there do exist singular
$W^{1,2-}$ weak solutions to (\ref{MSS}) with $n=2;$ see Osserman [O]. Now
gradient estimates for (\ref{MSS}) with $k=2$ and $n\geq3$ still remain
mysterious and challenging.

Our construction goes as follows. In the first stage, we solve the special
Lagrangian equation (\ref{EsLag}) with the critical phase by
Cauchy-Kowalevskaya. The approximate solutions or initial data for the
relatively \textquotedblleft easier\textquotedblright\ corresponding quadratic
equation (\ref{s2}) are built up via a \emph{systematic} procedure, which
allows us to have the approximation at arbitrarily high order (Property 2.1
and 2.2), and eventually those highly (\textquotedblleft
oddly\textquotedblright\ $C^{1,1/\left(  2m-1\right)  }$) singular solutions
in Theorem 1.1 and Theorem 1.3. In the second stage, we take an
\textquotedblleft inversion\textquotedblright\ $\frac{\pi}{2}$ rotation of the
solutions from the first stage to obtain those singular solutions with phase
$0$ (Proposition 3.1). The singular solutions with other subcritical phases
are achieved via a preliminary \textquotedblleft horizontal\textquotedblright%
\ rotation before the \textquotedblleft inversion\textquotedblright%
\ $\frac{\pi}{2}$ rotation (Step 1 of Section 3). Some remarks are in order.
Those $U\left(  n\right)  $ rotations are \textquotedblleft
obvious\textquotedblright\ to produce for the $U\left(  n\right)  $ invariant
special Lagrangian equation (\ref{EsLag}). But it is by no means easy to
justify that the special Lagrangian submanifold is still a graph in the
rotated new coordinate system, thus a valid equation (\ref{EsLag}) to work on.
(Earlier development of those $U\left(  n\right)  $ rotations for
(\ref{EsLag}) can be found in [Y2] [Y3].) Here our \emph{elementary analytic}
justification for the \textquotedblleft inversion\textquotedblright%
\ $\frac{\pi}{2}$ rotation (Proposition 3.1) avoids a hard and deep
topological formula of [EL], which was employed in [NV]. Lastly we point out
that the  Legendre transformation (usually used for convex functions),  is
just the \textquotedblleft inversion\textquotedblright\ $\frac{\pi}{2}$
rotation followed by a conjugation for converting \textquotedblleft
gradient\textquotedblright\ graph $\left(  x,Du\left(  x\right)  \right)  $ to
the one $\left(  Du^{\ast}\left(  y\right)  ,y\right)  $ (now with saddle
potentials $u$ and $u^{\ast}$).  In the third stage, we kick in a little bit
extra to the preliminary \textquotedblleft horizontal\textquotedblright%
\ rotations of Stage 2, then after the same \textquotedblleft
inversion\textquotedblright\ $\frac{\pi}{2}$ rotation, we make up a
corresponding little bit \textquotedblleft backward\textquotedblright%
\ rotation to finally generate the desired family of smooth solutions in
Theorem 1.2, which break a priori Hessian estimates for special Lagrangian
equation (\ref{EsLag}) with subcritical phase. Note that here one cannot
produce those a priori estimate breaking family of smooth solutions by the
usual way, that is to solve the Dirichlet problem with smooth approximate
boundary data of the boundary value of a singular solution, as Theorem 1.1
shows the \emph{non-solvability} of smooth solution to the Dirichlet problem
to (\ref{EsLag}) of subcritical phase even with smooth boundary data. The
Dirichlet problem to the saddle branch of (\ref{Lap=MA}) or the equivalent
(\ref{EsLag}) with $n=3$ and $\Theta=0$ was \textquotedblleft
invited\textquotedblright\ by Caffarelli, Nirenberg, and Spruck in [CNS].

In closing, we point out that any further regularity beyond continuity for
continuous viscosity solutions to general special Lagrangian equation
(\ref{EsLag}) is unknown. We are also curious to know whether there exist
other $C^{1,\alpha}$ (no better) singular solutions to (\ref{EsLag}) with, in
particular, irrational exponents $\alpha$ between those odd reciprocals
$1/\left(  2m-1\right)  .$ Meanwhile, we guess that all $C^{1,\alpha}$ for
$\alpha>\frac{1}{3}$ solutions to special\ Lagrangian equation (\ref{EsLag})
with $n=3$ should be regular (analytic). This regularity for $C^{1,1}$
solutions to (\ref{EsLag}) in dimension three was shown in [Y1]. Earlier on,
Urbas [U, Theorem 1.1] proved the regularity for better than Pogorelov
solutions, namely all $C^{1,\alpha}$ for $\alpha>1-\frac{2}{n}$ (convex)
solutions to the (dual) Monge-Amp\`{e}re equation $\ln\det D^{2}u=\ln
\lambda_{1}+\cdots+\ln\lambda_{n}=c$ are $C^{3,\beta}$ and eventually analytic.

\section{Cauchy-Kowalevskaya with critical phase $\Theta=\frac{\pi}{2}$}

As a preparation for the constructions in the next three sections, we solve
the following special Lagrangian equation with critical phase in dimension
three by Cauchy-Kowalevskaya. The quadratic nature of the equation at the
critical phase is easier to work with than the cubic nature of the equations otherwise.

Our approximate solution $P\left(  x\right)  $ to the equation%
\begin{equation}
\left\{
\begin{array}
[c]{l}%
\sigma_{2}\left(  D^{2}u\right)  =\frac{1}{2}\left[  \left(  \bigtriangleup
u\right)  ^{2}-\left\vert D^{2}u\right\vert ^{2}\right]  =1\ \ \text{or }%
\sum_{i=1}^{3}\arctan\lambda_{i}=\frac{\pi}{2}\\
u_{3}\left(  x_{1},x_{2},0\right)  =P_{3}\left(  x_{1},x_{2},0\right) \\
u\left(  x_{1},x_{2},0\right)  =P\left(  x_{1},x_{2},0\right)
\end{array}
\right.  \label{Eck}%
\end{equation}
is a polynomial of degree $2m$%
\[
P=\frac{1}{2}\left(  x_{1}^{2}+x_{2}^{2}\right)  +\operatorname{Re}Z^{m}%
x_{3}+\frac{m^{2}}{4}\rho^{2m-2}x_{3}^{2}+\varepsilon\sum_{j=0}^{m}a_{j}%
x_{3}^{2m-2j}\rho^{2j},
\]
where $Z=x_{1}+\sqrt{-1}x_{2}=\rho\exp\left(  \sqrt{-1}\theta\right)  ,$
coefficients $\varepsilon$ and $a_{j}s$ are to be determined later. We
construct this $P$ satisfying the following four properties, so does $u$ then,
for $\left\vert x\right\vert =r\leq r_{m}$ with positive $r_{m}$ depending
only on $m.$

\noindent Property 2.1. $\sigma_{2}\left(  D^{2}P\right)  -1=\left[
r^{3m-3}\right]  ,$ here $\left[  r^{k}\right]  $ represents an analytic
function starting from order $k.$ Then the solution $u$ coincide with $P$ up
to order $3m-2\ $($\geq2m$ for $m\geq2,3,4,\cdots$).

\noindent Property 2.2. The three eigenvalues of $D^{2}P,$ then also $D^{2}u$
satisfy%
\begin{align*}
\lambda_{1}  &  =1+\left[  r^{m-1}\right] \\
\lambda_{2}  &  =1+\left[  r^{m-1}\right] \\
-\delta_{2}\left(  m\right)  r^{2m-2}  &  \leq\lambda_{3}\leq-\delta
_{1}\left(  m\right)  r^{2m-2}%
\end{align*}

\noindent Property 2.3. The \textquotedblleft gradient\textquotedblright%
\ graph%
\begin{gather*}
\left(  x,Du\right)  =\\
\left(
\begin{array}
[c]{c}%
x,x_{1}+O\left(  \rho\right)  \left[  r^{m-1}\right]  +\left[  r^{2m}\right]
,\ x_{2}+O\left(  \rho\right)  \left[  r^{m-1}\right]  +\left[  r^{2m}\right]
,\ \\
\operatorname{Re}Z^{m}+\frac{m^{2}}{2}\rho^{2m-2}x_{3}-2m\varepsilon
x_{3}^{2m-1}+\varepsilon\rho^{2}\left[  r^{2m-3}\right]  +\left[
r^{2m}\right]
\end{array}
\right)  .
\end{gather*}
\noindent Property 2.4. The gradient $Du$ satisfies%
\[
\delta_{3}\left(  m\right)  r^{2m-1}\leq\left\vert Du\left(  x\right)
\right\vert \leq\delta_{4}\left(  m\right)  r.
\]

We first find the equation near a quadratic solution. Let%
\[
u=\frac{1}{2}\left(  \mu_{1}x_{1}^{2}+\mu_{2}x_{2}^{2}+\mu_{3}x_{3}%
^{2}\right)  +w\left(  x\right)  .
\]
Then%
\begin{gather*}
\sigma_{2}\left(  D^{2}u\right)  -1=\frac{1}{2}\left[  \left(  \bigtriangleup
u\right)  ^{2}-\left\vert D^{2}u\right\vert ^{2}\right]  -1\\
=\frac{1}{2}\left[  \left(  \mu_{1}+\mu_{2}+\mu_{3}+\bigtriangleup w\right)
^{2}-\sum_{i=1}^{3}\left(  \mu_{i}+w_{ii}\right)  ^{2}-2w_{12}^{2}-2w_{23}%
^{2}-2w_{13}^{2}\right]  -1\\
=\mu_{1}\left(  \bigtriangleup w-w_{11}\right)  +\mu_{2}\left(  \bigtriangleup
w-w_{22}\right)  +\mu_{3}\left(  \bigtriangleup w-w_{33}\right)  +\frac{1}%
{2}\left[  \left(  \bigtriangleup w\right)  ^{2}-\left\vert D^{2}w\right\vert
^{2}\right] \\
+\mu_{1}\mu_{2}+\mu_{2}\mu_{3}+\mu_{3}\mu_{1}-1.
\end{gather*}
Set $\mu_{1}=\mu_{2}=1$ and $\mu_{3}=0,$ we get%
\begin{align*}
\sigma_{2}\left(  D^{2}u\right)  -1  &  =w_{11}+w_{22}+2w_{33}+\frac{1}%
{2}\left[  \left(  \bigtriangleup w\right)  ^{2}-\left\vert D^{2}w\right\vert
^{2}\right] \\
&  =\tilde{\bigtriangleup}w+\frac{1}{2}\left[  \left(  \bigtriangleup
w\right)  ^{2}-\left\vert D^{2}w\right\vert ^{2}\right]  ,
\end{align*}
where $\tilde{\bigtriangleup}=\partial_{11}+\partial_{22}+2\partial_{33}.$ To
make the right hand side of the above equation vanish at high orders, we
choose $w=h+Q+H,$ where%
\begin{align*}
h  &  =\operatorname{Re}Z^{m}x_{3},\ \text{an ad hoc \textquotedblleft
harmonic\textquotedblright\ function;}\\
Q  &  =\frac{m^{2}}{4}\rho^{2m-2}x_{3}^{2},\ \text{to match }\sigma_{2}\left(
D^{2}h\right)  ;\\
H  &  =\varepsilon\left(  -x_{3}^{2m}+\sum_{j=1}^{m}a_{j}x_{3}^{2m-2j}%
\rho^{2j}\right)  ,\ \text{to make eigenvalue }\lambda_{3}\ \text{negative.}%
\end{align*}
Then
\[
\sigma_{2}\left(  D^{2}u\right)  -1=\underset{0}{\underbrace{\tilde
{\bigtriangleup}h}}+\tilde{\bigtriangleup}Q+\tilde{\bigtriangleup}H+\frac
{1}{2}\left[  \left(  \bigtriangleup h\right)  ^{2}-\left\vert D^{2}%
h\right\vert ^{2}\right]  +\left[  r^{3m-3}\right]  .
\]
A simple calculation leads to%
\[
D^{2}h=\left[
\begin{array}
[c]{ccc}%
\operatorname{Re}\left[  m\left(  m-1\right)  Z^{m-2}\right]  x_{3} &
-\operatorname{Im}\left[  m\left(  m-1\right)  Z^{m-2}\right]  x_{3} &
\operatorname{Re}mZ^{m-1}\\
& -\operatorname{Re}\left[  m\left(  m-1\right)  Z^{m-2}\right]  x_{3} &
-\operatorname{Im}mZ^{m-1}\\
&  & 0
\end{array}
\right]  .
\]
It follows that%
\[
\sigma_{2}\left(  D^{2}h\right)  =-\left[  m\left(  m-1\right)  \rho
^{m-2}\right]  ^{2}x_{3}^{2}-m^{2}\rho^{2m-2}.
\]
Thus%
\[
\tilde{\bigtriangleup}Q+\sigma_{2}\left(  D^{2}h\right)  =\left[  m\left(
m-1\right)  \rho^{m-2}\right]  ^{2}x_{3}^{2}+m^{2}\rho^{2m-2}+\sigma
_{2}\left(  D^{2}h\right)  =0.
\]
Finally we fix the \textquotedblleft harmonic\textquotedblright\ $H$
satisfying $\tilde{\bigtriangleup}H=0$ with%
\begin{align*}
a_{0}  &  =-1\\
a_{j}  &  =-\frac{2\cdot\left(  2m-2j+2\right)  \left(  2m-2j+1\right)
}{\left(  2j\right)  ^{2}}a_{j-1}\\
&  =\left(  -1\right)  ^{j+1}\frac{2^{j}2m\left(  2m-1\right)  \cdots\left(
2m-2j+1\right)  }{2^{2}4^{2}\cdots\left(  2j\right)  ^{2}}\ \ \text{for }%
j\geq1,
\end{align*}
and $\varepsilon$ is still pending. Therefore, $P=\frac{1}{2}\left(  x_{1}%
^{2}+x_{2}^{2}\right)  +h+Q+H,$ satisfies
\[
\sigma_{2}\left(  D^{2}P\right)  -1=\left[  r^{3m-3}\right]  .
\]

Now the analytic solution $u$ to (\ref{Eck}) with initial data $P$ follows
from Cauchy-Kowalevskaya. As in [NV], considering the linear equation for
difference $u-P,$ the Cauchy-Kowalevskaya procedure implies that the solution
$u$ coincides with $P$ up to order $3m-2$ ($\geq2m$ for $m\geq2$). Thus
Property 2.1 is verified.

We move to Property 2.2. We have
\begin{equation}
D^{2}u=\left[
\begin{array}
[c]{ccc}%
1+\left[  r^{m-1}\right]  & \left[  r^{m-1}\right]  & \operatorname{Re}%
mZ^{m-1}+\left[  r^{2m-2}\right] \\
& 1+\left[  r^{m-1}\right]  & -\operatorname{Im}mZ^{m-1}+\left[
r^{2m-2}\right] \\
&  & \frac{m^{2}}{2}\rho^{2m-2}+H_{33}+\left[  r^{2m-1}\right]
\end{array}
\right]  . \label{AsympHess}%
\end{equation}
Because the eigenvalues are Lipschitz functions of the matrix entries, we get%
\begin{align*}
\lambda_{1}  &  =1+\left[  r^{m-1}\right] \\
\lambda_{2}  &  =1+\left[  r^{m-1}\right]  .
\end{align*}
By the quadratic Taylor expansion of the isolated eigenvalue $\lambda_{3}$ in
terms of the matrix entries near $D^{2}u\left(  0\right)  ,$ we obtain%

\begin{align*}
\lambda_{3}  &  =u_{33}-u_{13}^{2}-u_{23}^{2}+\left[  r^{3m-3}\right] \\
&  =\left.
\begin{array}
[c]{c}%
\frac{m^{2}}{2}\rho^{2m-2}+\varepsilon\sum_{j=0}^{m}\left(  2m-2j\right)
\left(  2m-2j-1\right)  a_{j}x_{3}^{2m-2j}\rho^{2j}\\
-m^{2}\rho^{2m-2}+\left[  r^{2m-1}\right]  \ \ \text{for }m\geq2
\end{array}
\right. \\
&  =\left.
\begin{array}
[c]{c}%
\varepsilon\left[  -2m\left(  2m-1\right)  x_{3}^{2m-2}+\tilde{a}_{2}%
x_{3}^{2m-4}\rho^{2}+\cdots+\tilde{a}_{m-1}\rho^{2m-2}\right]  -\frac{m^{2}%
}{2}\rho^{2m-2}\\
+\left[  r^{2m-1}\right]
\end{array}
\right. \\
&  =H_{33}-\frac{m^{2}}{2}\rho^{2m-2}+\left[  r^{2m-1}\right]  .
\end{align*}
The \textquotedblleft harmonic\textquotedblright\ function $H_{33}$ cannot
have a definite sign near the origin, but with the help of $-\frac{m^{2}}%
{2}\rho^{2m-2}$ and small $\varepsilon,$ we make $\lambda_{3}$ negative. Let
$\eta$ be a small positive constant to be chosen shortly.

\noindent Case 1: $\eta\left\vert x_{3}\right\vert \geq\rho.$ We have%
\[
\left[  -2m\left(  2m-1\right)  x_{3}^{2m-2}+\tilde{a}_{2}x_{3}^{2m-4}\rho
^{2}+\cdots\right]  =-\left[  2m\left(  2m-1\right)  +O\left(  1\right)
\eta^{2}\right]  x_{3}^{2m-2}.
\]
Note $r/\sqrt{1+\eta^{2}}\leq\left\vert x_{3}\right\vert \leq r,$ then
\[
-\left\{
\begin{array}
[c]{c}%
\varepsilon\left[  2m\left(  2m-1\right)  +O\left(  1\right)  \eta^{2}\right]
\\
+\frac{m^{2}}{2}+o\left(  1\right)
\end{array}
\right\}  r^{2m-2}\leq\lambda_{3}\leq-\varepsilon\left[
\begin{array}
[c]{c}%
\frac{2m\left(  2m-1\right)  +O\left(  1\right)  \eta^{2}}{\left(
\sqrt{1+\eta^{2}}\right)  ^{2m-2}}\\
+o\left(  1\right)
\end{array}
\right]  r^{2m-2}.
\]

\noindent Case 2: $\eta\left\vert x_{3}\right\vert <\rho.$ Note $r\eta
/\sqrt{1+\eta^{2}}\leq\rho\leq r,$ we have%
\[
\left[  -2m\left(  2m-1\right)  x_{3}^{2m-2}+\tilde{a}_{2}x_{3}^{2m-4}\rho
^{2}+\cdots\right]  =\frac{O\left(  1\right)  }{\eta^{2m-2}}\rho^{2m-2},
\]
then%
\[
\lambda_{3}=-\left[  \frac{m^{2}}{2}-\frac{\varepsilon O\left(  1\right)
}{\eta^{2m-2}}\right]  \rho^{2m-2}+\left[  r^{2m-1}\right]
\]
and%
\[
-\left[
\begin{array}
[c]{c}%
\frac{m^{2}}{2}-\frac{\varepsilon O\left(  1\right)  }{\eta^{2m-2}}\\
+o\left(  1\right)
\end{array}
\right]  r^{2m-2}\leq\lambda_{3}\leq\left\{
\begin{array}
[c]{c}%
-\left[  \frac{m^{2}}{2}-\frac{\varepsilon O\left(  1\right)  }{\eta^{2m-2}%
}\right]  \frac{\eta^{2m-2}}{\left(  \sqrt{1+\eta^{2}}\right)  ^{2m-2}}\\
+o\left(  1\right)
\end{array}
\right\}  r^{2m-2}.
\]

We first choose $\eta=\eta\left(  m\right)  >0$ small, next $\varepsilon
=\varepsilon\left(  \eta,m\right)  >0$ smaller, then there exist $\delta
_{1}=\delta\left(  \eta,m\right)  >0$ and $\delta_{2}=\delta_{2}\left(
m\right)  >0$ such that%
\[
-\delta_{2}r^{2m-2}\leq\lambda_{3}\leq-\delta_{1}r^{2m-2}%
\]
for $r\leq r_{m}.$ Here $r_{m}$ is within the valid radius for the
Cauchy-Kowalevskaya solution $u.$

Property 2.3 follows from $u=P+\left[  r^{3m-2}\right]  .$

Finally we prove Property 2.4. The upper bound is straightforward. For the
lower bound, from Property 2.3, we have%

\begin{align*}
|Du(x)|^{2}  &  =(x_{1}+[r^{m}])^{2}+(x_{2}+[r^{m}])^{2})^{2}\\
&  +(ReZ^{m}+\frac{m^{2}}{2}\rho^{2m-2}x_{3}-2m\varepsilon x_{3}%
^{2m-1}+\varepsilon\rho^{2}[r^{2m-2}]+[r^{2m}])^{2}.
\end{align*}

Case 1: $x_{3}^{2}\geq\rho.$ From $r^{2}=\rho^{2}+x_{3}^{2}\leq\left(
x_{3}^{2}+1\right)  x_{3}^{2},$ we know%
\[
\left\vert x_{3}\right\vert \geq r.
\]
Note that the other terms than $-2m\varepsilon x_{3}^{2m-1}$ in $u_{3}\left(
x\right)  $ have the following asymptotic behavior near the origin
\begin{align*}
\left\vert ReZ^{m}\right\vert  &  \leq\rho^{m}=x_{3}^{2m},\\
\left\vert \frac{m^{2}}{2}\rho^{2m-2}x_{3}\right\vert  &  \leq\frac{m^{2}}%
{2}\left\vert x_{3}\right\vert ^{4m-3},\\
\varepsilon\rho^{2}[r^{2m-2}]  &  =O(x_{3}^{2m+2}),\\
\lbrack r^{2m}]  &  =O(x_{3}^{2m}).
\end{align*}
It follows that%
\begin{align*}
\left\vert Du\left(  x\right)  \right\vert ^{2}  &  \geq\left\vert
u_{3}\left(  x\right)  \right\vert ^{2}=\left[  -2m\varepsilon x_{3}%
^{2m-1}+O(x_{3}^{2m})\right]  ^{2}\\
&  \geq\delta_{3}\left(  m\right)  x_{3}^{2\left(  2m-1\right)  }\geq
\delta_{3}\left(  m\right)  r^{2\left(  2m-1\right)  }%
\end{align*}
for $\left\vert x\right\vert \leq r_{m}$ with positive $r_{m}$ and $\delta
_{3}\left(  m\right)  $ to be fixed shortly.

Case 2: $x_{3}^{2}<\rho.$ From $r^{2}=\rho^{2}+x_{3}^{2}\leq\left(
\rho+1\right)  \rho,$ we know
\[
\rho>r^{2}.
\]
Then
\begin{align*}
\left\vert Du\left(  x\right)  \right\vert ^{2}  &  \geq u_{1}^{2}\left(
x\right)  +u_{2}^{2}\left(  x\right)  =\rho^{2}+2x_{1}[r^{m}]+2x_{2}%
[r^{m}]+2[r^{m}]^{2}\\
&  =\rho^{2}+O(\rho^{\frac{m+1}{2}})\\
&  \geq\frac{1}{2}\rho^{2}\geq\frac{1}{2}r^{4}\geq r^{2\left(  2m-1\right)  }%
\end{align*}
for $\rho\leq r\leq r_{m}$ with the positive $r_{m}$ to be fixed next.

Now we choose positive $\delta_{3}\left(  m\right)  $ small and the small
positive $r_{m}$ within the valid radius for Cauchy-Kowalevskaya solution $u$
and Property 2.2, Property 2.4 is then completely justified.

Since $u\left(  r_{m}x\right)  /r_{m}^{2}$ is still a solution to $\sigma
_{2}\left(  D^{2}u\right)  =1$ in $B_{1}\subset R^{3}.$ We may assume the
above constructed solution is already defined in $B_{1}\subset\mathbb{R}^{3}.$
Note that $D\left[  u\left(  r_{m}x\right)  /r_{m}^{2}\right]  =Du\left(
r_{m}x\right)  /r_{m}$ and $D^{2}\left[  u\left(  r_{m}x\right)  /r_{m}%
^{2}\right]  =D^{2}u\left(  r_{m}x\right)  ,$ we see that Property 2.2 and
Property 2.4 are still valid in $B_{1}$ with $\delta_{1},\ \delta_{2}%
,\ \delta_{3}$ replaced by $r_{m}^{2m-2}\delta_{1},$ $r_{m}^{2m-2}\delta
_{2},\ $ $r_{m}^{2m-2}\delta_{2}$ respectively, and $\delta_{4}$ unchanged.

\section{Rotate to subcritical phases $\left\vert \Theta\right\vert <\frac
{\pi}{2}:$ proof of Theorem 1.1}

In this section, we carry out the construction of the singular solutions in
Theorem 1.1 by \textquotedblleft horizontally\textquotedblright\ and $\pi/2$
rotating the Cauchy-Kowalevskaya solutions from Section 2. The latter
rotation, Proposition 3.1 is pivotal.

Step 1. Let $\alpha\in\lbrack0,\pi/4).$ We will take $\alpha=\Theta/2$ for
$\Theta\in\lbrack0,\pi/2)$ in Step 3 of this section. We make a $U\left(
3\right)  $ rotation in $\mathbb{C}^{3}:$ $\tilde{z}^{\prime}=e^{\alpha
\sqrt{-1}}z^{\prime}$ and $\tilde{z}_{3}=z_{3}$ with $\tilde{z}=\left(
\tilde{z}^{\prime},\tilde{z}_{3}\right)  =\left(  \tilde{x}^{\prime},\tilde
{x}_{3}\right)  +\sqrt{-1}\left(  \tilde{y}^{\prime},\tilde{y}_{3}\right)  $
and $z=\left(  z^{\prime},z_{3}\right)  =$ $\left(  x^{\prime},x_{3}\right)
+\sqrt{-1}\left(  y^{\prime},y_{3}\right)  .$ Because $U\left(  3\right)  $
rotations preserve the length and complex structure, $\mathfrak{M}=\left(
x,Dv\left(  x\right)  \right)  $ for $x\in B_{1}$ is still a special
Lagrangian submanifold in the new coordinate system with parameterization%
\begin{equation}
\left\{
\begin{array}
[c]{c}%
\tilde{x}=\left(  x_{1}\cos\alpha+u_{1}\left(  x\right)  \sin\alpha
,\ x_{2}\cos\alpha+u_{2}\left(  x\right)  \sin\alpha,\ x_{3}\right) \\
\tilde{y}=\left(  -x_{1}\sin\alpha+u_{1}\left(  x\right)  \cos\alpha
,\ -x_{2}\sin\alpha+u_{2}\left(  x\right)  \cos\alpha,\ u_{3}\left(  x\right)
\right)
\end{array}
\right.  . \label{h-rotation}%
\end{equation}
We show that $\mathfrak{M}$ is also a \textquotedblleft
gradient\textquotedblright\ graph over $\tilde{x}$ space. From Property 2.2,
we know that $u\left(  x^{\prime},x_{3}\right)  $ is a convex function in
terms of $x^{\prime}$ for $\left\vert x\right\vert \leq1,$ or if necessary
$\left\vert x\right\vert \leq r_{m}$ with $r_{m}$ depending only on $m.$ From
(\ref{AsympHess}) we also assume $\left\vert D^{\prime}u_{3}\left(  x\right)
\right\vert =\left\vert \left(  u_{13},u_{23}\right)  \left(  x\right)
\right\vert \leq1/2$ for $\left\vert x\right\vert \leq r_{m}.$ Then we have%
\begin{gather}
\delta_{5}\left(  m\right)  \left\vert x-x^{\ast}\right\vert ^{2}%
\geq\left\vert \tilde{x}\left(  x\right)  -\tilde{x}\left(  x^{\ast}\right)
\right\vert ^{2}\label{distanceshrinking}\\
=\left\vert \left(  x^{\prime}-x^{\ast\prime}\right)  \cos\alpha+\left[
\begin{array}
[c]{c}%
D^{\prime}u\left(  x^{\prime},x_{3}\right)  -D^{\prime}u\left(  x^{\prime
},x_{3}^{\ast}\right) \\
+\underline{D^{\prime}u\left(  x^{\prime},x_{3}^{\ast}\right)  -D^{\prime
}u\left(  x^{\ast\prime},x_{3}^{\ast}\right)  }%
\end{array}
\right]  \sin\alpha\right\vert ^{2}+\left\vert x_{3}-x_{3}^{\ast}\right\vert
^{2}\nonumber\\
\geq\left[
\begin{array}
[c]{c}%
\frac{1}{2}\left\vert \left(  x^{\prime}-x^{\ast\prime}\right)  \cos
\alpha+\left(  \underline{u\left(  x^{\prime},x_{3}^{\ast}\right)  -D^{\prime
}u\left(  x^{\ast\prime},x_{3}^{\ast}\right)  }\right)  \sin\alpha\right\vert
^{2}\\
-\left\vert \left(  D^{\prime}u\left(  x^{\prime},x_{3}\right)  -D^{\prime
}u\left(  x^{\prime},x_{3}^{\ast}\right)  \right)  \sin\alpha\right\vert
^{2}+\left\vert x_{3}-x_{3}^{\ast}\right\vert ^{2}%
\end{array}
\right] \nonumber\\
\geq\left[
\begin{array}
[c]{c}%
\frac{\cos^{2}\alpha}{2}\left\vert x^{\prime}-x^{\ast\prime}\right\vert
^{2}+\cos\alpha\sin\alpha\ \underset{\geq0}{\underbrace{\left\langle
x^{\prime}-x^{\ast\prime},D^{\prime}u\left(  x^{\prime},x_{3}^{\ast}\right)
-D^{\prime}u\left(  x^{\ast\prime},x_{3}^{\ast}\right)  \right\rangle }}\\
-\sin^{2}\alpha\ \underset{\leq1}{\underbrace{2\left\Vert D^{\prime}%
u_{3}\right\Vert _{L^{\infty}\left(  B_{r_{m}}\right)  }}}\ \left\vert
x_{3}-x_{3}^{\ast}\right\vert ^{2}+\left\vert x_{3}-x_{3}^{\ast}\right\vert
^{2}%
\end{array}
\right] \nonumber\\
\geq\frac{\cos^{2}\alpha}{2}\left\vert x^{\prime}-x^{\ast\prime}\right\vert
^{2}+\left(  1-\sin^{2}\alpha\right)  \left\vert x_{3}-x_{3}^{\ast}\right\vert
^{2}\nonumber\\
\geq\frac{1}{4}\left\vert x-x^{\ast}\right\vert ^{2}.
\label{distanceexpansion}%
\end{gather}
It follows that $\mathfrak{M}$ is a special Lagrangian graph $\left(
\tilde{x},D\tilde{u}\left(  \tilde{x}\right)  \right)  $ over a domain
containing a ball of radius $1/\sqrt{2}$ in $\tilde{x}$ space. The Hessian of
the potential function $\tilde{u}$ satisfies%
\begin{gather}
D^{2}\tilde{u}=\frac{\partial\tilde{y}}{\partial\tilde{x}}=\frac
{\partial\tilde{y}}{\partial x}\left(  \frac{\partial\tilde{x}}{\partial
x}\right)  ^{-1}\nonumber\\
=\left[
\begin{array}
[c]{ccc}%
-\sin\alpha+u_{11}\cos\alpha & u_{12}\cos\alpha & u_{13}\cos\alpha\\
u_{12}\cos\alpha & -\sin\alpha+u_{22}\cos\alpha & u_{23}\cos\alpha\\
u_{13} & u_{23} & u_{33}%
\end{array}
\right] \nonumber\\
\ \ \ \ \ \ \ \ \ \ \left[
\begin{array}
[c]{ccc}%
\cos\alpha+u_{11}\sin\alpha & u_{12}\sin\alpha & u_{13}\sin\alpha\\
u_{12}\sin\alpha & \cos\alpha+u_{22}\sin\alpha & u_{23}\sin\alpha\\
0 & 0 & 1
\end{array}
\right]  ^{-1}\nonumber\\
=\left[
\begin{array}
[c]{ccc}%
\tan\left(  \frac{\pi}{4}-\alpha\right)  &  & \\
& \tan\left(  \frac{\pi}{4}-\alpha\right)  & \\
&  & 0
\end{array}
\right]  +\left[  r^{m-1}\right]  \label{h-expansion}%
\end{gather}
and%
\begin{equation}
\det D^{2}\tilde{u}=\tan\left(  \frac{\pi}{4}-\alpha\right)  \left[
-\frac{m^{2}}{2}\rho^{2m-2}+\tan\left(  \frac{\pi}{4}-\alpha\right)
H_{33}\right]  -\left[  r^{2m-1}\right]  , \label{det-expression}%
\end{equation}
where the above abused notation $\left[  r^{m-1}\right]  $ also represents a
matrix whose all entries are analytic functions starting from order $m-1,$ and
(\ref{h-expansion}) (\ref{det-expression}) follow from a simple calculation
and the asymptotic behavior of $D^{2}u,$ (\ref{AsympHess}). We verify the
following three properties for $D^{2}\tilde{u}.$ There exists a positive
number $\tilde{r}_{m,\alpha}$ depending only on $m$ and $\alpha\in\lbrack
0,\pi/4)$ such that for $\left\vert \tilde{x}\right\vert \leq$ $\tilde
{r}_{m,\alpha}$ we have:

\noindent Property 3.1. The determinant $\det D^{2}\tilde{u}\left(  \tilde
{x}\right)  $ is negative for small $\tilde{x}\neq0,$ indeed%
\[
\det D^{2}\tilde{u}\left(  \tilde{x}\right)  \approx-\tan\left(  \frac{\pi}%
{4}-\alpha\right)  \left\vert \tilde{x}\right\vert ^{2m-2};
\]

\noindent Property 3.2. The upper left 2$\times$2 principle minor of the
Hessian $D^{2}\tilde{u},\ $%
\[
2\tan\left(  \frac{\pi}{4}-\alpha\right)  I\geq\left(  D^{2}\tilde{u}\right)
^{\prime}\geq\frac{\tan\left(  \frac{\pi}{4}-\alpha\right)  }{2}I;
\]

\noindent Property 3.3. The three eigenvalues $\tilde{\lambda}_{i}$ of the
Hessian $D^{2}\tilde{u}$ satisfy
\[
\left\{
\begin{array}
[c]{l}%
\tilde{\theta}_{1}=\arctan\tilde{\lambda}_{1}=\left(  \frac{\pi}{4}%
-\alpha\right)  \left[  1+O\left(  \left\vert \tilde{x}\right\vert
^{m-1}\right)  \right] \\
\tilde{\theta}_{2}=\arctan\tilde{\lambda}_{2}=\left(  \frac{\pi}{4}%
-\alpha\right)  \left[  1+O\left(  \left\vert \tilde{x}\right\vert
^{m-1}\right)  \right] \\
\tilde{\theta}_{3}=\arctan\tilde{\lambda}_{3}\approx-\frac{1}{\tan\left(
\frac{\pi}{4}-\alpha\right)  }\left\vert \tilde{x}\right\vert ^{2m-2}\left[
1+O\left(  \left\vert \tilde{x}\right\vert ^{m-1}\right)  \right]
\end{array}
\right.  ;
\]
where $``\approx^{\prime\prime}$ means two quantities are comparable up to a
multiple of constant depending only on $m$ and $\alpha.$ Relying on
(\ref{det-expression}), repeating the arguments for the estimate of
$\lambda_{3}$ in Section 2, using (\ref{distanceexpansion}) and
(\ref{h-rotation}), we obtain Property 3.1. Property 3.2 follows from
(\ref{h-expansion}). From (\ref{distanceexpansion}) (\ref{h-expansion}) and
the Lipschitz continuity of eigenvalues in terms of matrix entries, we derive
the estimates for the first two eigenvalues in Property 3.3. In turn, noticing
$\tilde{\lambda}_{3}=\det D^{2}\tilde{u}/\left(  \tilde{\lambda}_{1}%
\tilde{\lambda}_{2}\right)  ,$ relying on both (\ref{distanceshrinking}) and
(\ref{distanceexpansion}) we get two sided estimates of the last eigenvalue.

Step 2. We proceed with the following proposition.

\begin{proposition}
Let $\mathcal{L}=\left(  x,Df\right)  $ be a Lagrangian surface in
$\mathbb{C}^{3}=\mathbb{R}^{3}\times\mathbb{R}^{3}$ with the smooth potential
$f$ over $B_{\rho}\subset\mathbb{R}^{3},$ satisfying:%
\begin{gather}
Df\left(  0\right)  =0,\nonumber\\
\det D^{2}f\left(  x\right)  <0\ \ \text{for }x\neq0,\nonumber\\
\left\{
\begin{array}
[c]{c}%
\kappa^{-1}I\geq\left[
\begin{array}
[c]{cc}%
f_{11}\left(  x\right)  & f_{12}\left(  x\right) \\
f_{21}\left(  x\right)  & f_{22}\left(  x\right)
\end{array}
\right]  \geq\kappa I\\
\left\vert D^{\prime}f_{3}\left(  x\right)  \right\vert =\left\vert \left(
f_{13},f_{23}\right)  \left(  x\right)  \right\vert \leq\frac{1}%
{2},\ \text{say}%
\end{array}
\right\}  \ \text{\ for }x\in B_{\rho}. \label{hconvexity}%
\end{gather}
Then $\mathcal{L}$ can be re-represented as a graph $\left(  \tilde{x}%
,\tilde{y}\right)  =\left(  \tilde{x},D\tilde{f}\left(  \tilde{x}\right)
\right)  $ over open set $\Omega=Df\left(  B_{\frac{1}{2}\kappa^{2}\rho
}\right)  $ with $\tilde{x}+\sqrt{-1}\tilde{y}=e^{-\frac{\pi}{2}\sqrt{-1}%
}\left(  x+\sqrt{-1}y\right)  $ and $\tilde{f}\in C^{1}\left(  \Omega\right)
\cap C^{\infty}\left(  \Omega\backslash\left\{  0\right\}  \right)  .$
\end{proposition}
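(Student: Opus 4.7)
The heart of the proposition is showing the gradient map $\Phi(x) := Df(x)$ is injective on $B_{\kappa^{2}\rho/2}$; once injectivity is in hand, $\tilde f$ is essentially a Legendre transform (with $D\tilde f = -\Phi^{-1}$) and its regularity follows from standard inverse function theorem arguments. My plan has two movements: first use the $2\times 2$ convexity to reduce injectivity of $\Phi$ to injectivity along one-dimensional fibers, then use $\det D^{2}f<0$ to establish fiberwise monotonicity.

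For the first movement, I would perform a partial Legendre transform in the first two variables. Since $M:=(f_{ij})_{i,j=1,2}$ satisfies $M\geq \kappa I$ uniformly on $B_{\rho}$, the map $x'\mapsto D'f(x',x_{3})$ is strongly monotone for each fixed $x_{3}$. A computation modeled on (3.3), using both $M\geq \kappa I$ and $|D'f_{3}|\leq 1/2$, yields a bi-Lipschitz estimate
\[
|\Phi_{1}(x)-\Phi_{1}(x^{*})|^{2}\geq c(\kappa)\,|x-x^{*}|^{2},\qquad \Phi_{1}(x):=(D'f(x',x_{3}),\,x_{3}),
\]
so $\Phi_{1}$ is a global diffeomorphism onto its image with $C^{1}$ inverse $(y',x_{3})\mapsto (x'(y',x_{3}),x_{3})$. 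For the second movement, observe that in these partial coordinates the last component $\Phi_{3}\circ \Phi_{1}^{-1}$ is the function $x_{3}\mapsto f_{3}(x'(y',x_{3}),x_{3})$ with $y'$ fixed. Differentiating the identity $D'f(x'(y',x_{3}),x_{3})=y'$ in $x_{3}$ gives $\partial_{x_{3}}x'=-M^{-1}v$ with $v:=D'f_{3}$, so the Schur complement appears on the nose:
\[
\frac{d}{dx_{3}}\bigl[f_{3}(x'(y',x_{3}),x_{3})\bigr]=f_{33}-v^{T}M^{-1}v=\frac{\det D^{2}f}{\det M}.
\]
Since $\det M\geq \kappa^{2}>0$ and $\det D^{2}f<0$ for $x\neq 0$, this derivative is strictly negative off the origin, hence strictly monotone along each fiber. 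Combining injectivity of $\Phi_{1}$ with this fiberwise monotonicity yields global injectivity of $\Phi$ on $B_{\kappa^{2}\rho/2}$.

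With $\Phi$ continuous and injective on an open subset of $\mathbb{R}^{3}$, invariance of domain makes $\Omega=\Phi(B_{\kappa^{2}\rho/2})$ open and $\Phi$ a homeomorphism onto $\Omega$. Away from the origin $\det D^{2}f\neq 0$, so $\Phi$ is a $C^{\infty}$ local diffeomorphism there, and the Legendre-type potential $\tilde f(\tilde x):=f(\Phi^{-1}(\tilde x))-\Phi^{-1}(\tilde x)\cdot \tilde x$ is $C^{\infty}$ on $\Omega\setminus\{0\}$ with $D\tilde f(\tilde x)=-\Phi^{-1}(\tilde x)$. At the origin $\Phi(0)=0$ gives $D\tilde f(0)=0$, and continuity of $\Phi^{-1}$ at $0$ upgrades this to $D\tilde f\in C^{0}(\Omega)$, i.e., $\tilde f\in C^{1}(\Omega)$. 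The main obstacle is the monotonicity step: one must recognize that the combination $f_{33}-v^{T}M^{-1}v$ emerging naturally from the partial Legendre transform is precisely the Schur complement $\det D^{2}f/\det M$, whose sign is furnished by hypothesis; the domain restriction to $B_{\kappa^{2}\rho/2}$ is chosen so that the bi-Lipschitz bound on $\Phi_{1}$ places $\Omega$ inside a controlled open neighborhood of the origin.
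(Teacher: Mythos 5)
Your proof is essentially the paper's proof. Your map $\Phi_1(x)=(D'f(x',x_3),x_3)$ is exactly the paper's coordinate change $\Psi$, your bi-Lipschitz estimate is the paper's display (3.4), and your Schur-complement identity
$\tfrac{d}{dx_3}\bigl[f_3(x'(y',x_3),x_3)\bigr]=f_{33}-v^T M^{-1}v=\det D^2 f/\det M$
is precisely the determinant chain-rule computation the paper performs for $\partial y_3/\partial t_3=\det D_t(Df\circ\Psi^{-1})$; recognizing it as a Schur complement is a nicer bookkeeping device but the same fact. The two places you diverge are cosmetic: for openness you invoke Brouwer's invariance of domain, whereas the paper deliberately gives an intermediate-value-theorem argument (Step 2.2) in keeping with its advertised goal of replacing the topological machinery of Nadirashvili--Vl\u{a}du\c{t} (the Eisenbud--Levine formula) by an \emph{elementary analytic} argument --- invariance of domain is of course far lighter than Eisenbud--Levine, but the paper's IVT route is still more elementary and self-contained; and for the potential $\tilde f$ you write the Legendre formula $f(\Phi^{-1})-\Phi^{-1}\cdot\tilde x$ while the paper defines $\tilde f$ by a path integral of $-\Phi$ and checks well-definedness from closedness of the $1$-form --- these are equivalent. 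One small thing worth tightening: your formula for $D\tilde f$ should follow the paper's sign convention ($\tilde x=y$, $\tilde y=-x$, so $D\tilde f=-\Phi^{-1}$), which your Legendre expression indeed produces, so this is consistent, just worth stating cleanly. Your closing remark about how $B_{\kappa^2\rho/2}$ is chosen is also what the paper does via the two-sided Lipschitz bounds on $\Psi$ and $\Psi^{-1}$, though the paper spells out the radius accounting ($\Psi(B_\rho)\supset B^t_{\kappa\rho/\sqrt 2}$, then $\Psi^{-1}(B^t_{\kappa\rho/2})\supset B_{\kappa^2\rho/2}$) because it later needs the radius to be uniform in $\varepsilon$ for Theorem 1.2.
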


\begin{proof}
[Proof of Proposition 3.1]Note that the $U\left(  3\right)  $ rotation by
$\pi/2$ is $\left(  \tilde{x},\tilde{y}\right)  $ $=\left(  y,-x\right)  .$
This proposition really says that the map $Df$ \ has a (unique) continuous
inverse $\Phi=-D\tilde{f}.$

Step 2.1. We first prove $Df$ is one-to-one on $B_{\kappa^{2}\rho}.$ Consider
a coordinate change given by $t=\Psi\left(  x\right)  =(f_{1}\left(  x\right)
,f_{2}\left(  x\right)  ,x_{3}).$ Then the Jacobian of $\Psi$ is
\begin{equation}
\det D_{x}\Psi\left(  x\right)  =\det\left[
\begin{array}
[c]{ccc}%
f_{11} & f_{12} & f_{13}\\
f_{21} & f_{22} & f_{23}\\
0 & 0 & 1
\end{array}
\right]  \left(  x\right)  =\det\left[
\begin{array}
[c]{cc}%
f_{11} & f_{12}\\
f_{21} & f_{22}%
\end{array}
\right]  \left(  x\right)  >0. \label{hnonvanishing}%
\end{equation}
Hence $\Psi$ is a local diffeomorphism on $B_{\rho}.$ Note that $\Psi$ is
actually a distance expansion map. We have for all $x,\ x^{\#}$ in $B_{\rho}$
\begin{gather}
\left\vert \Psi\left(  x\right)  -\Psi\left(  x^{\#}\right)  \right\vert
^{2}=\left\vert
\begin{array}
[c]{c}%
D^{\prime}f\left(  x^{\prime},x_{3}\right)  -D^{\prime}f\left(  x^{\#\prime
},x_{3}\right) \\
+D^{\prime}f\left(  x^{\#\prime},x_{3}\right)  -D^{\prime}f\left(
x^{\#\prime},x_{3}^{\#}\right)
\end{array}
\right\vert ^{2}+\left\vert x_{3}-x_{3}^{\#}\right\vert ^{2}\nonumber\\
\geq\left[
\begin{array}
[c]{c}%
\frac{1}{2}\left\vert D^{\prime}f\left(  x^{\prime},x_{3}\right)  -D^{\prime
}f\left(  x^{\#\prime},x_{3}\right)  \right\vert ^{2}-\left\vert D^{\prime
}f\left(  x^{\#\prime},x_{3}\right)  -D^{\prime}f\left(  x^{\#\prime}%
,x_{3}^{\#}\right)  \right\vert ^{2}\\
+\left\vert x_{3}-x_{3}^{\#}\right\vert ^{2}%
\end{array}
\right] \nonumber\\
=\left[
\begin{array}
[c]{c}%
\frac{1}{2}\left\vert D^{\prime}f\left(  x^{\prime},x_{3}\right)  -D^{\prime
}f\left(  x^{\#\prime},x_{3}\right)  -\kappa\left(  x^{\prime}-x^{\#\prime
}\right)  +\kappa\left(  x^{\prime}-x^{\#\prime}\right)  \right\vert ^{2}\\
-\left\vert D^{\prime}f\left(  x^{\#\prime},x_{3}\right)  -D^{\prime}f\left(
x^{\#\prime},x_{3}^{\#}\right)  \right\vert ^{2}+\left\vert x_{3}-x_{3}%
^{\#}\right\vert ^{2}%
\end{array}
\right] \nonumber\\
\geq\,\,\left[
\begin{array}
[c]{c}%
\underset{\geq0}{\,\underbrace{\left\langle D^{\prime}f\left(  x^{\prime
},x_{3}\right)  -D^{\prime}f\left(  x^{\#\prime},x_{3}\right)  -\kappa\left(
x^{\prime}-x^{\#\prime}\right)  ,\kappa\left(  x^{\prime}-x^{\#\prime}\right)
\right\rangle }}\\
+\frac{1}{2}\left\vert \kappa\left(  x^{\prime}-x^{\#\prime}\right)
\right\vert ^{2}-2\left\Vert D^{\prime}f\right\Vert _{L^{\infty}\left(
B_{\rho}\right)  }^{2}\left\vert x_{3}-x_{3}^{\#}\right\vert ^{2}+\left\vert
x_{3}-x_{3}^{\#}\right\vert ^{2}%
\end{array}
\right] \nonumber\\
\geq\frac{\kappa^{2}}{2}\left\vert x^{\prime}-x^{\#\prime}\right\vert
^{2}+\left(  1-2\left\Vert D^{\prime}f\right\Vert _{L^{\infty}\left(  B_{\rho
}\right)  }^{2}\right)  \left\vert x_{3}-x_{3}^{\#}\right\vert ^{2}\nonumber\\
\geq\frac{\kappa^{2}}{2}\left\vert x-x^{\#}\right\vert ^{2},
\label{Kexpansion}%
\end{gather}
where we used (\ref{hconvexity}). Thus $\Psi$ is a \textquotedblleft
global\textquotedblright\ diffeomorphism on $B_{\rho}.$

We claim that the $\Psi$-image of $B_{\rho},$ $\Psi(B_{\rho})\supset
B_{\frac{\kappa}{\sqrt{2}}\rho}^{t}.$ Otherwise, let $t^{\#}$ be a boundary
point of $\Psi(B_{\rho})$ in $\mathring{B}_{\frac{\kappa}{\sqrt{2}}\rho}^{t}.$
We know there exist a sequence of points $x_{i}\in B_{\rho}$ such that
$\Psi(x_{i})\ $goes to $t^{\#}$ and $x_{i}$ goes to $x^{\#}\in\bar{B}_{\rho}$
as $i$ goes to infinity. If $x^{\#}\in\mathring{B}_{\rho},$ then $\Psi\left(
x^{\#}\right)  =t^{\#}$ by the continuity of $\Psi.$ But this is impossible
because $\Psi\left(  x^{\#}\right)  $ is an interior point of $\Psi(B_{\rho})$
under the diffeomorphism of $\Psi.$ If $x^{\#}\in\partial B_{\rho},$ from
(\ref{Kexpansion}), we have%
\[
|t^{\#}|=\lim_{i\rightarrow\infty}\left\vert \Psi(x_{i})-0\right\vert
\ \geq\lim_{i\rightarrow\infty}\frac{\kappa}{\sqrt{2}}|x_{i}-0|=\frac{\kappa
}{\sqrt{2}}|x^{\#}|=\frac{\kappa}{\sqrt{2}}\rho.
\]
This contradicts $t^{\#}\in\mathring{B}_{\frac{\kappa}{\sqrt{2}}\rho}^{t}.$

From (\ref{hconvexity}) (only the upper bound), then%
\[
|\Psi(x)-\Psi(x^{\#})|\leq||DD^{^{\prime}}f||_{L^{\infty}\left(  B_{\rho
}\right)  }\ |x-x^{\#}|\leq\kappa^{-1}|x-x^{\#}|,
\]
if follows that $\Psi^{-1}$ is also a distance expansion map with a factor
$\kappa.$ Apply the arguments above we get $\Psi^{-1}(B_{\frac{1}{2}\kappa
\rho}^{t})\supset B_{\frac{1}{2}\kappa^{2}\rho}$ or $B_{\frac{1}{2}\kappa\rho
}^{t}\supset\Psi(B_{\frac{1}{2}\kappa^{2}\rho}).$

Now for the injectivity of $Df$ on $B_{\frac{1}{2}\kappa^{2}\rho},$ it
suffices to show that
\[
y\left(  t\right)  =Df\circ\Psi^{-1}\left(  t\right)  =(t_{1},t_{2}%
,\ f_{3}(x\left(  t\right)  )
\]
is one-to-one in $B_{\frac{1}{2}\kappa\rho}^{t}.$ Suppose that $y\left(
t\right)  =y\left(  t^{\#}\right)  ,$ then
\[
t_{1}^{\#}=t_{1},\ t_{2}^{\#}=t_{2},\ \ y_{3}(t^{\#})=y_{3}(t).
\]
Note that
\begin{align*}
\frac{\partial y_{3}\left(  t_{1},t_{2},\xi\right)  }{\partial t_{3}}  &
=\left[
\begin{array}
[c]{ccc}%
1 & 0 & 0\\
0 & 1 & 0\\
\ast & \ast & \frac{\partial y_{3}\left(  t_{1},t_{2},\xi\right)  }{\partial
t_{3}}%
\end{array}
\right] \\
&  =\det D_{t}\left(  Df\circ\Psi^{-1}\right)  =\left.  \det\left(
D^{2}f\right)  \right\vert _{\Psi^{-1}\left(  t_{1},t_{2},\xi\right)  }%
\cdot\left.  \det D_{t}\Psi^{-1}\right\vert _{\left(  t_{1},t_{2},\xi\right)
}<0
\end{align*}
for $(t_{1},t_{2},\xi)\neq0,$ where we used (\ref{hnonvanishing}) and $\det
D^{2}f\left(  x\right)  <0$ for $x\neq0.$ It follows that the function
$y_{3}(t_{1},t_{2},\xi)$ is strictly decreasing in $\xi.$ Now $y_{3}%
(t_{1},t_{2},t_{3}^{\#})=y_{3}(t_{1},t_{2},t_{3})$ implies $t_{3}^{\#}=t_{3}.$
This shows that $y=Df\circ\Psi^{-1}$ is one-to-one.

So far we have obtained the inverse function $\Phi=\left(  Df\right)  ^{-1}$
on $\Omega=Df\left(  B_{\frac{1}{2}\kappa^{2}\rho}\right)  .$

Step 2.2. We prove $Df$ is an open map from $B_{\rho}$ to $\mathbb{R}^{3}.$
Since the Jacobian $\det D^{2}f\left(  x\right)  \neq0$ for $x\neq0,$ $Df$ is
already a local diffeomorphism for $x\neq0.$ It suffices to show that the
image of an open neighborhood of $0$ in $B_{\rho},$ under $Df,$ contains an
open neighborhood of $0$ in $\mathbb{R}^{3}.$ Since $\Psi$ is a
diffeomorphism, we only need to show this property for $Df\circ\Psi^{-1}.$
Indeed we only need to consider the image of the ball $B_{2\eta}^{t}$ of
radius $2\eta$ centered at $t=0$ for a small $\eta>0.$ According to Step 2.1,
$y\left(  t_{1},t_{2},\cdot\right)  $ is strictly decreasing in the third
variable. So $2h_{-}=y_{3}(0,0,\eta)<0$ and $2h_{+}=y_{3}(0,0,-\eta)>0.$ By
continuity of $y=Df\circ\Psi^{-1},$ there exists $\eta^{\prime}\in\left(
0,\eta\right)  $ such that $y_{3}(t_{1},t_{2},\eta)<h_{-}<0\ \ $and
$y_{3}(t_{1},t_{2},-\eta)>h_{+}>0$ for $|(t_{1},t_{2})|\leq\eta^{\prime}.$
Then by intermediate value theorem (for function $y_{3}\left(  t_{1}%
,t_{2},\cdot\right)  $), the open set%
\begin{align*}
\{|(y_{1},y_{2})|  &  <\eta^{\prime}\}\times\{h_{-}<y_{3}<h_{+}\}\subset
Df\circ\Psi^{-1}(\{|(t_{1},t_{2})|\leq\eta^{\prime}\}\times\{|t_{3}|\leq
\eta\})\\
&  \subset Df\circ\Psi^{-1}(B_{2\eta}^{t}).
\end{align*}
Thus $Df$ is an open map.

Step 2.3. Now $\Omega=Df(B_{\kappa^{2}\rho})$ is an open neighborhood of
$y=0,$ and $\Phi$ is continuous on $\Omega$ by the openness of $Df.$ Lastly we
find a potential for the Lagrangian submanifold $\mathcal{L}$ now represented
as $\left(  \tilde{x},-\Phi\left(  \tilde{x}\right)  \right)  .$ Let%
\[
\tilde{f}(\tilde{x})=\int_{0}^{\tilde{x}}-\Phi^{1}(s)ds_{1}-\Phi^{2}%
(s)ds_{2}-\Phi^{3}(s)ds_{3}.
\]
Because $D_{\tilde{x}}\left(  -\Phi\left(  \tilde{x}\right)  \right)  $ is
symmetric $-\left(  D^{2}f\right)  ^{-1}$ when $\tilde{x}\neq0$ and $\Phi$ is
bounded, $\tilde{f}\left(  \tilde{x}\right)  $ is well-defined on $\Omega.$
Further we know $\tilde{f}\in C^{1}\left(  \Omega\right)  \cap C^{\infty
}\left(  \Omega\backslash\left\{  0\right\}  \right)  .$

The proof for Proposition 3.1 is complete.
\end{proof}

\textbf{Remark.} For the purpose of Theorem 1.1, we can replace
(\ref{hconvexity}) by a weaker condition (\ref{hnonvanishing}) $\det\left[
D^{2}f\right]  ^{\prime}>0.$ Consequently we have no estimate on the size of
the existing neighborhood supporting the solution $\tilde{\tilde{u}}$ in this
section, then $u^{m}$ for each single $m$ and $\Theta.$ The stronger
assumption (\ref{hconvexity}) is designed for Theorem 1.2 where we need a
uniform control with respect to $\varepsilon$ on the valid radius for the
solutions $\tilde{\tilde{u}}^{\varepsilon},$ then $u^{\varepsilon}.$ Lastly
there is another argument for the openness of the particular map $D\tilde
{u}=Df,$ relying on the uniqueness of the pre-image of $y=0$ (which can be
also derived from the distance expansion property at the origin
(\ref{mismatchpower})), instead of using the strict monotonicity property in
Step 2.2.

Step 3. Equipped with Property 3.1, 3.2, and (\ref{h-expansion}), we apply
Proposition 3.1 to our function $\tilde{u}\left(  \tilde{x}\right)  $ with
$\tilde{x}$ replaced by $\tilde{\tilde{x}},$ $x$ replaced by $\tilde{x},$
$\rho=\tilde{r}_{m,\alpha},$ and $\kappa=\tan\left(  \frac{\pi}{4}%
-\alpha\right)  /2.$ Then we get a new $C^{1}$ function $\tilde{\tilde{u}%
}\left(  \tilde{\tilde{x}}\right)  ,$ defined on an open neighborhood of
$\tilde{\tilde{x}}=0.$ By Property 3.3, the three eigen-angles $\tilde
{\tilde{\theta}}_{i}=\arctan\tilde{\tilde{\lambda}}_{i}$ of $D^{2}%
\tilde{\tilde{u}}\left(  \tilde{\tilde{x}}\right)  $ away from the origin
satisfy
\begin{equation}
\left\{
\begin{array}
[c]{l}%
\tilde{\tilde{\theta}}_{1}=\tilde{\theta}_{1}-\frac{\pi}{2}=-\frac{\pi}%
{4}-\alpha+o(1)\\
\tilde{\tilde{\theta}}_{2}=\tilde{\theta}_{2}-\frac{\pi}{2}=-\frac{\pi}%
{4}-\alpha+o(1)\\
\tilde{\tilde{\theta}}_{3}=\tilde{\theta}_{3}-\frac{\pi}{2}+\pi\\
\ \ =\frac{\pi}{2}-\frac{\delta_{m,\alpha}\left(  x\right)  }{\tan\left(
\frac{\pi}{4}-\alpha\right)  }\left\vert D\tilde{\tilde{u}}\left(
\tilde{\tilde{x}}\right)  \right\vert ^{2m-2}\left[  1+O\left(  \left\vert
D\tilde{\tilde{u}}\left(  \tilde{\tilde{x}}\right)  \right\vert ^{m-1}\right)
\right]
\end{array}
\right.  , \label{eigenvalue half pi}%
\end{equation}
where the positive number $\delta_{m,\alpha}\left(  x\right)  $ is bounded
from both below and above uniformly with respect to $\tilde{\tilde{x}},$ and
\[
\sum_{i=1}^{3}\tilde{\tilde{\theta}}_{i}=-2\alpha.
\]

We verify $\tilde{\tilde{u}}$ is still a viscosity solution to (\ref{EsLag})
with $\Theta=-2\alpha$ across the origin. For any quadratic $Q$ touching
$\tilde{\tilde{u}}$ at the origin from below, we have under the
\textquotedblleft diagonalized\textquotedblright\ coordinate system for
$D^{2}\tilde{\tilde{u}}\left(  0\right)  $
\[
D^{2}Q\leq\left[
\begin{array}
[c]{ccc}%
\tan\left(  -\frac{\pi}{4}-\alpha\right)  &  & \\
& \tan\left(  -\frac{\pi}{4}-\alpha\right)  & \\
&  & \infty
\end{array}
\right]  .
\]
It follows that the eigenvalues $\lambda_{i}^{\ast}$ of $D^{2}Q$ must satisfy
\[
\arctan\lambda_{1}^{\ast}\leq-\frac{\pi}{4}-\alpha,\ \arctan\lambda_{2}^{\ast
}\leq-\frac{\pi}{4}-\alpha,\ \text{and }\arctan\lambda_{3}^{\ast}<\frac{\pi
}{2}.
\]
Then the quadratic satisfies
\[
\sum_{i=1}^{3}\arctan\lambda_{i}^{\ast}<-2\alpha.
\]
Observe that we can never arrange any quadratic touching $\tilde{\tilde{u}}$
from above at the origin. Then there is nothing to check. When those testing
quadratics touch the smooth $\tilde{\tilde{u}}$ away from the origin, the
verification according to the definition of viscosity solutions is
straightforward. Thus $\tilde{\tilde{u}}$ is a viscosity solution to
(\ref{EsLag}) with $\Theta=-2\alpha$ in a neighborhood of the origin.

Step 4. Lastly we verify that the solution $\tilde{\tilde{u}}$ is in fact
$C^{1,1/(2m-1)}$ but not $C^{1,\delta}$ for any $\delta>1/\left(  2m-1\right)
$ in a neighborhood of the origin. The latter is easy. From Property 2.3 and
(\ref{distanceexpansion}), we see that%
\[
\left(  0,0,\tilde{x}_{3},D\tilde{u}\left(  0,0,\tilde{x}_{3}\right)  \right)
=\left(  0,0,\tilde{x}_{3},\left[  \tilde{x}_{3}^{2m}\right]  ,\left[
\tilde{x}_{3}^{2m}\right]  ,-2m\varepsilon\tilde{x}_{3}^{2m-1}+\left[
\tilde{x}_{3}^{2m}\right]  \right)  .
\]
It follows that%
\[
\frac{\left\vert \tilde{x}_{3}-0\right\vert }{\left\vert D\tilde{u}\left(
0,0,\tilde{x}_{3}\right)  -D\tilde{u}\left(  0\right)  \right\vert ^{\delta}%
}=\frac{\left\vert \tilde{x}_{3}\right\vert }{\left(  2m\varepsilon+\left[
\tilde{x}_{3}\right]  \right)  ^{\delta}\left\vert \tilde{x}_{3}\right\vert
^{\left(  2m-1\right)  \delta}}\rightarrow\infty\ \ \
\]
as $\tilde{x}_{3}\rightarrow0\ \ $for any $\delta>\frac{1}{2m-1}.$ This shows
that $\tilde{\tilde{u}}$ is not $C^{1,\delta}$ for any $\delta>1/\left(
2m-1\right)  .$

Next we prove that $\tilde{\tilde{u}}$ is $C^{1,1/\left(  2m-1\right)  }$ by
the argument in [NV]. Observe that for $i=1,2,3$%
\begin{align*}
&  \left[  \frac{\left\vert \tilde{\tilde{u}}_{i}\left(  \tilde{\tilde{x}%
}\right)  -\tilde{\tilde{u}}_{i}\left(  \tilde{\tilde{x}}^{\ast}\right)
\right\vert }{\left\vert \tilde{\tilde{x}}-\tilde{\tilde{x}}^{\ast}\right\vert
^{1/\left(  2m-1\right)  }}\right]  ^{2m-1}\\
&  =\left[  \frac{\left\vert \tilde{\tilde{u}}_{i}\left(  \tilde{\tilde{x}%
}\right)  -\tilde{\tilde{u}}_{i}\left(  \tilde{\tilde{x}}^{\ast}\right)
\right\vert }{\left\vert \tilde{\tilde{u}}_{i}^{2m-1}\left(  \tilde{\tilde{x}%
}\right)  -\tilde{\tilde{u}}_{i}^{2m-1}\left(  \tilde{\tilde{x}}^{\ast
}\right)  \right\vert ^{1/\left(  2m-1\right)  }}\right]  ^{2m-1}%
\frac{\left\vert \tilde{\tilde{u}}_{i}^{2m-1}\left(  \tilde{\tilde{x}}\right)
-\tilde{\tilde{u}}_{i}^{2m-1}\left(  \tilde{\tilde{x}}^{\ast}\right)
\right\vert }{\left\vert \tilde{\tilde{x}}-\tilde{\tilde{x}}^{\ast}\right\vert
}\\
&  \leq C\left(  m\right)  \left(  2m-1\right)  \sup_{\tilde{\tilde{x}}%
}\left\vert \tilde{\tilde{u}}_{i}\left(  \tilde{\tilde{x}}\right)  \right\vert
^{2m-2}\left\vert D\tilde{\tilde{u}}_{i}\left(  \tilde{\tilde{x}}\right)
\right\vert \\
&  \leq C\left(  m\right)  \sup_{\tilde{\tilde{x}}}\left\vert D\tilde
{\tilde{u}}\left(  \tilde{\tilde{x}}\right)  \right\vert ^{2m-2}\left\vert
D^{2}\tilde{\tilde{u}}\left(  \tilde{\tilde{x}}\right)  \right\vert \\
&  \leq C\left(  m\right)  \sup_{\tilde{\tilde{x}}}\left\vert D\tilde
{\tilde{u}}\left(  \tilde{\tilde{x}}\right)  \right\vert ^{2m-2}\frac
{1}{\left\vert D\tilde{\tilde{u}}\left(  \tilde{\tilde{x}}\right)  \right\vert
^{2m-2}}\\
&  \leq C\left(  m\right)  ,
\end{align*}
where we used the fact that the scalar function $t^{1/\left(  2m-1\right)  }$
is $C^{1/\left(  2m-1\right)  }\left(  \mathbb{R}^{1}\right)  $ for the first
inequality, and (\ref{eigenvalue half pi}) for the third inequality.

Finally by scaling $u^{m}\left(  x\right)  =\tilde{\tilde{u}}\left(  \tau
x\right)  /\tau^{2}$ with valid radius $\tau$ implicitly depending on $m$ and
the $\tilde{r}_{m,\alpha}$ in Step 1 (We need to make this dependence explicit
and then to have a uniform control with respect to $\varepsilon$ on the valid
radius for the solutions $u^{\varepsilon}$ in Section 4. Our guaranteed valid
radius goes to zero as $m$ goes to infinity.), the desired solutions in
Theorem 1.1 with each fixed $\Theta\in(-\frac{\pi}{2},0]$ are achieved. By
symmetry, $-u^{m}$ are the sought solutions with phase $\Theta\in
\lbrack0,\frac{\pi}{2}).$

\section{Rotate to smooth solutions: proof of Theorem 1.2}

In this section, we create the desired family of solutions by another
corresponding families of $U\left(  3\right)  $ rotations in $\mathbb{C}^{3}$
on top those two in Section 3. For any fixed $\Theta\in\lbrack0,\frac{\pi}%
{2}),$ let $4\gamma=\frac{\pi}{2}-\Theta>0.$ We start the construction by
taking small positive numbers $\varepsilon\in\left(  0,\gamma\right)  $ and
solution $u$ with fixed $m$ in Section 1.

Step 1. We take the $U\left(  3\right)  $ rotation in Step 1 of Section 3 with
$\alpha=\frac{\Theta}{2}-\frac{3\varepsilon}{2}.$ The valid radius of the
rotation and the estimates of the Hessian $D^{2}\tilde{u}$ are still valid. To
prepare the final rotations in the last Step of this section, we require the
following estimates of $D^{2}\tilde{u}$ with eigenvalues $\tilde{\lambda}_{i}$
by shrinking the radius for $\tilde{x}$ or $\left\vert x\right\vert \leq
r_{\Theta}:$%
\begin{equation}
\left\{
\begin{array}
[c]{l}%
\tilde{\theta}_{1}^{\varepsilon}=\arctan\tilde{\lambda}_{1}^{\varepsilon
}=\frac{\pi}{4}-\frac{\Theta}{2}+\frac{3\varepsilon}{2}+o\left(  1\right)
\geq\gamma\\
\tilde{\theta}_{2}^{\varepsilon}=\arctan\tilde{\lambda}_{2}^{\varepsilon
}=\frac{\pi}{4}-\frac{\Theta}{2}+\frac{3\varepsilon}{2}+o\left(  1\right)
\geq\gamma\\
\tilde{\theta}_{3}^{\varepsilon}=\arctan\tilde{\lambda}_{3}^{\varepsilon
}=-\frac{\delta_{m,\alpha}\left(  \tilde{x}\right)  }{\tan\left(  \frac{\pi
}{4}-\frac{\Theta}{2}+\frac{3\varepsilon}{2}\right)  }\left\vert \tilde
{x}\right\vert ^{2m-2}\left[  1+O\left(  \left\vert \tilde{x}\right\vert
^{m-1}\right)  \right]
\end{array}
\right.  , \label{eigenPThm12Step1}%
\end{equation}
where again the positive $\delta_{m,\alpha}\left(  \tilde{x}\right)  $ is
bounded from both below and above uniformly with respect to $\tilde{x}$ and
$\varepsilon,$ further the above estimates and $r_{\Theta}$ are both uniform
with respect to $\varepsilon.$

Step 2. Exactly as in Step 3 of Section 3, we apply Proposition 3.1 with
$\rho=r_{\Theta}$ and $\kappa=\tan\left(  \frac{\pi}{4}-\alpha\right)  /2$ to
$\left(  \tilde{x},D\tilde{u}\right)  $ to get the potential$\ \tilde
{\tilde{u}}^{\varepsilon}$ with $\left(  \tilde{\tilde{x}},D\tilde{\tilde{u}%
}^{\varepsilon}\right)  $ for $\tilde{\tilde{x}}\in D\tilde{u}\left(
B_{\frac{1}{2}\kappa^{2}r_{\Theta}}\right)  .$ It follows from
(\ref{eigenPThm12Step1}) and (\ref{eigenvalue half pi}) that%
\begin{equation}
\left\{
\begin{array}
[c]{l}%
\tilde{\tilde{\theta}}_{1}=-\frac{\pi}{4}-\frac{\Theta}{2}+\frac{3\varepsilon
}{2}+o\left(  1\right)  \geq\gamma-\frac{\pi}{2}\\
\tilde{\tilde{\theta}}_{2}=-\frac{\pi}{4}-\frac{\Theta}{2}+\frac{3\varepsilon
}{2}+o\left(  1\right)  \geq\gamma-\frac{\pi}{2}\\
\tilde{\tilde{\theta}}_{3}=\frac{\pi}{2}-\left\vert o\left(  1\right)
\right\vert
\end{array}
\right.  \label{eigenPThm12Step2}%
\end{equation}
for $\left\vert \tilde{\tilde{x}}\right\vert =\left\vert D\tilde{u}\left(
\tilde{x}\right)  \right\vert \leq\tilde{\tilde{r}}_{\Theta}.$ We need to show
this $\tilde{\tilde{r}}_{\Theta}$ and the above $o\left(  1\right)  $ terms
are still uniform with respect to $\varepsilon,$ and also the $\left\vert
o\left(  1\right)  \right\vert $ term for $\tilde{\tilde{\theta}}_{3}$ never
vanishes when the input $\tilde{\tilde{x}}$ does not vanish (actually this
$\left\vert o\left(  1\right)  \right\vert $ can be made explicit enough by
(\ref{mismatchpower})). All these can be seen from the following inequalities
\begin{equation}
\delta_{6}\left(  m\right)  \left\vert \tilde{x}\right\vert \geq\left\vert
\tilde{\tilde{x}}\left(  \tilde{x}\right)  \right\vert =\left\vert D\tilde
{u}\left(  \tilde{x}\right)  \right\vert \geq\delta_{7}\left(  m\right)
\left\vert \tilde{x}\right\vert ^{2m-1}. \label{mismatchpower}%
\end{equation}
Indeed we see the first inequality by recalling (\ref{h-rotation})%
\begin{gather*}
D\tilde{u}\left(  \tilde{x}\left(  x\right)  \right)  =\tilde{y}\left(
x\right)  =\left(  \cos\alpha\ D^{\prime}u\left(  x\right)  -\sin
\alpha\ x^{\prime},u_{3}\left(  x\right)  \right)  ,\\
\left(  D^{\prime}u,u_{3}\right)  \left(  x^{\prime},x_{3}\right)  =Du\left(
x\right)  \in C^{1},
\end{gather*}
and (\ref{distanceshrinking}).\ We have to work a little harder for the second
inequality. Because of (\ref{AsympHess}) and $\alpha\in\left(  -\frac{3}%
{2}\gamma,\frac{\pi}{4}-4\gamma\right)  ,$ the following convexity for
function
\[
u_{x_{3}}^{\prime}\left(  x^{\prime}\right)  =\cos\alpha\ u\left(  x^{\prime
},x_{3}\right)  -\frac{\sin\alpha}{2}\left\vert x^{\prime}\right\vert ^{2}%
\]
is available%
\[
\cos\alpha\ \left[  D^{2}u\left(  x\right)  \right]  ^{\prime}-\sin
\alpha\ \left[
\begin{array}
[c]{cc}%
1 & \\
& 1
\end{array}
\right]  \geq\frac{\cos\alpha-\sin\alpha}{2}\ I>0
\]
for $\left\vert \left(  x^{\prime},x_{3}\right)  \right\vert \leq r_{\Theta},$
where we shrink $r_{\Theta}$ if necessary. Then we get%
\begin{align*}
\left\vert \tilde{y}\left(  x\right)  \right\vert ^{2}  &  =\left\vert \left(
Du_{x_{3}}^{\prime}\left(  x^{\prime}\right)  ,u_{3}\left(  x\right)  \right)
\right\vert ^{2}=\left\vert \left(  Du_{x_{3}}^{\prime}\left(  x^{\prime
}\right)  -bx^{\prime}+bx^{\prime},u_{3}\left(  x\right)  \right)  \right\vert
^{2}\\
&  \geq\left\vert bx^{\prime}\right\vert ^{2}+2\underset{\geq0}{\underbrace
{\left\langle Du_{x_{3}}^{\prime}\left(  x^{\prime}\right)  -bx^{\prime
},bx^{\prime}\right\rangle }}+\left\vert u_{3}\left(  x\right)  \right\vert
^{2}\\
&  \geq b^{2}\left\vert x^{\prime}\right\vert ^{2}+\left\vert u_{3}\left(
x\right)  \right\vert ^{2},
\end{align*}
where we set $b=\left(  \cos\alpha-\sin\alpha\right)  /2$ for simplicity of
notation. In order to bound $\left\vert x^{\prime}\right\vert ^{2}$ from
below, we use Property 2.3 to obtain%
\begin{align*}
\left\vert D^{\prime}u\left(  x^{\prime},x_{3}\right)  \right\vert ^{2}  &
\leq2\left\vert D^{\prime}u\left(  x^{\prime},x_{3}\right)  -D^{\prime
}u\left(  0,x_{3}\right)  \right\vert ^{2}+2\left\vert D^{\prime}u\left(
0,x_{3}\right)  \right\vert ^{2}\\
&  \leq C_{m}\left\vert x^{\prime}\right\vert ^{2}+\left\vert \left[
r^{2m}\right]  \right\vert ^{2},\ \ \text{or}\\
\left\vert x^{\prime}\right\vert ^{2}  &  \geq\frac{1}{C_{m}}\left\vert
D^{\prime}u\left(  x\right)  \right\vert ^{2}-\left\vert \left[
r^{2m}\right]  \right\vert ^{2}.
\end{align*}
Hence%
\[
\left\vert \tilde{y}\left(  x\right)  \right\vert ^{2}\geq\frac{b^{2}}{C_{m}%
}\left\vert Du\left(  x\right)  \right\vert ^{2}-\left\vert \left[
r^{2m}\right]  \right\vert ^{2},
\]
where we assumed the positive $b^{2}/C_{m}\leq1$ without loss of generality.
By virtue of Property 2.4, we get%
\begin{align*}
\left\vert \tilde{y}\left(  x\right)  \right\vert ^{2}  &  \geq\frac{b^{2}%
}{C_{m}}\left\vert r^{2m-1}\right\vert ^{2}-\left\vert \left[  r^{2m}\right]
\right\vert ^{2}\\
&  \geq\frac{\left(  \delta_{7}\left(  m\right)  \right)  ^{2}}{\delta
_{5}\left(  m\right)  }\left(  \left\vert x\right\vert ^{2m-1}\right)  ^{2}%
\end{align*}
for $\left\vert x\right\vert \leq r_{\Theta}$ and small positive $\delta
_{7}\left(  m,\alpha\right)  ,$ where again we shrink $r_{\Theta}$ if
necessary. By (\ref{distanceshrinking}) we arrive at the second inequality of
(\ref{mismatchpower}).

Step 3. We make a final family of $U\left(  3\right)  $ rotations in
$\mathbb{C}^{3}:\tilde{\tilde{\tilde{z}}}=e^{\varepsilon\sqrt{-1}}%
\tilde{\tilde{z}}.$ Again because $U\left(  3\right)  $ rotation preserves the
length and complex structure, $\mathfrak{M}=\left(  \tilde{\tilde{x}}%
,D\tilde{\tilde{u}}^{\varepsilon}\right)  $ for $\left\vert \tilde{\tilde{r}%
}\right\vert \leq\tilde{\tilde{r}}_{\Theta}$ still a smooth special Lagrangian
submanifold with parameterization%
\[
\left\{
\begin{array}
[c]{c}%
\tilde{\tilde{\tilde{x}}}=\tilde{\tilde{x}}\cos\varepsilon+D\tilde{\tilde{u}%
}^{\varepsilon}\left(  \tilde{\tilde{x}}\right)  \sin\varepsilon\\
\tilde{\tilde{\tilde{y}}}=-\tilde{\tilde{x}}\sin\varepsilon+D\tilde{\tilde{u}%
}^{\varepsilon}\left(  \tilde{\tilde{x}}\right)  \cos\varepsilon
\end{array}
\right.  .
\]
We show that $\mathfrak{M}$ is still a \textquotedblleft
gradient\textquotedblright\ graph over $\tilde{\tilde{\tilde{x}}}$ space. From
(\ref{eigenPThm12Step2}) we know that the function $\tilde{\tilde{u}%
}^{\varepsilon}\left(  \tilde{\tilde{x}}\right)  +\frac{1}{2}\tan\left(
\frac{\pi}{2}-\gamma\right)  \left\vert \tilde{\tilde{x}}\right\vert ^{2}$ is
convex. We then have%
\begin{gather*}
\left\vert \tilde{\tilde{\tilde{x}}}\left(  \tilde{\tilde{x}}\right)
-\tilde{\tilde{\tilde{x}}}\left(  \tilde{\tilde{x}}^{\ast}\right)  \right\vert
^{2}=\left\vert \left(  \tilde{\tilde{x}}-\tilde{\tilde{x}}^{\ast}\right)
\cos\varepsilon+\left(  D\tilde{\tilde{u}}^{\varepsilon}\left(  \tilde
{\tilde{x}}\right)  -D\tilde{\tilde{u}}^{\varepsilon}\left(  \tilde{\tilde{x}%
}^{\ast}\right)  \right)  \sin\varepsilon\right\vert ^{2}\\
=\left\vert
\begin{array}
[c]{c}%
\left(  \tilde{\tilde{x}}-\tilde{\tilde{x}}^{\ast}\right)  \left[
\cos\varepsilon-\tan\left(  \frac{\pi}{2}-\gamma\right)  \sin\varepsilon
\right]  +\\
+\left[  \left(  D\tilde{\tilde{u}}^{\varepsilon}\left(  \tilde{\tilde{x}%
}\right)  -D\tilde{\tilde{u}}^{\varepsilon}\left(  \tilde{\tilde{x}}^{\ast
}\right)  \right)  +\left(  \tilde{\tilde{x}}-\tilde{\tilde{x}}^{\ast}\right)
\tan\left(  \frac{\pi}{2}-\gamma\right)  \right]  \sin\varepsilon
\end{array}
\right\vert ^{2}\\
\geq\left\{
\begin{array}
[c]{c}%
\left\vert \left(  \tilde{\tilde{x}}-\tilde{\tilde{x}}^{\ast}\right)  \left(
\cos\varepsilon-\tan\left(  \frac{\pi}{2}-\gamma\right)  \sin\varepsilon
\right)  \right\vert ^{2}+\\
+2\left[
\begin{array}
[c]{c}%
\cos\varepsilon-\\
\tan\left(  \frac{\pi}{2}-\gamma\right)  \sin\varepsilon
\end{array}
\right]  \sin\varepsilon\underset{\geq0}{\underbrace{\left\langle
\tilde{\tilde{x}}-\tilde{\tilde{x}}^{\ast},\left[
\begin{array}
[c]{c}%
\left(  D\tilde{\tilde{u}}^{\varepsilon}\left(  \tilde{\tilde{x}}\right)
-D\tilde{\tilde{u}}^{\varepsilon}\left(  \tilde{\tilde{x}}^{\ast}\right)
\right)  +\\
\left(  \tilde{\tilde{x}}-\tilde{\tilde{x}}^{\ast}\right)  \tan\left(
\frac{\pi}{2}-\gamma\right)
\end{array}
\right]  \right\rangle }}%
\end{array}
\right\} \\
=\left\vert \tilde{\tilde{x}}-\tilde{\tilde{x}}^{\ast}\right\vert ^{2}\cos
^{2}\varepsilon\left(  1-\frac{\tan\varepsilon}{\tan\gamma}\right)  ^{2}%
\geq\frac{1}{4}\left\vert \tilde{\tilde{x}}-\tilde{\tilde{x}}^{\ast
}\right\vert ^{2}%
\end{gather*}
provided we take $\varepsilon\in\left(  0,\gamma\right)  $ even smaller. It
follows that the smooth $\mathfrak{M}$ is a special Lagrangian graph $\left(
\tilde{\tilde{\tilde{x}}},D\tilde{\tilde{\tilde{u}}}^{\varepsilon}\left(
\tilde{\tilde{\tilde{x}}}\right)  \right)  $ over a domain containing a ball
of radius $\frac{1}{2}\tilde{\tilde{r}}_{\Theta}$ in $\tilde{\tilde{\tilde{x}%
}}$ space. The eigenvalues $\tilde{\tilde{\tilde{\lambda}}}_{i}^{\varepsilon}$
of the Hessian $D^{2}\tilde{\tilde{\tilde{u}}}^{\varepsilon}$ satisfy%
\begin{equation}
\left\{
\begin{array}
[c]{l}%
\tilde{\tilde{\tilde{\theta}}}_{1}^{\varepsilon}=\arctan\tilde{\tilde
{\tilde{\lambda}}}_{1}^{\varepsilon}=-\frac{\pi}{4}-\frac{\Theta}{2}%
+\frac{3\varepsilon}{2}-\varepsilon+o\left(  1\right) \\
\tilde{\tilde{\tilde{\theta}}}_{2}^{\varepsilon}=\arctan\tilde{\tilde
{\tilde{\lambda}}}_{2}^{\varepsilon}=-\frac{\pi}{4}-\frac{\Theta}{2}%
+\frac{3\varepsilon}{2}-\varepsilon+o\left(  1\right) \\
\tilde{\tilde{\tilde{\theta}}}_{3}^{\varepsilon}=\arctan\tilde{\tilde
{\tilde{\lambda}}}_{3}^{\varepsilon}=\frac{\pi}{2}-\varepsilon-\left\vert
o\left(  1\right)  \right\vert
\end{array}
\right.  . \label{eigenPThm12Step3}%
\end{equation}
It follows that $\tilde{\tilde{\tilde{u}}}^{\varepsilon}$ is smooth and
satisfies
\[
\arctan\tilde{\tilde{\tilde{\lambda}}}_{1}^{\varepsilon}+\arctan\tilde
{\tilde{\tilde{\lambda}}}_{2}^{\varepsilon}+\arctan\tilde{\tilde
{\tilde{\lambda}}}_{3}^{\varepsilon}=-\Theta\ \ \text{in \ }B_{\frac{1}%
{2}\tilde{\tilde{r}}_{\Theta}}.
\]

Finally set%
\[
u^{\varepsilon}\left(  x\right)  =-\frac{\tilde{\tilde{\tilde{u}}%
}^{\varepsilon}\left(  \frac{1}{2}\tilde{\tilde{r}}_{\Theta}\ x\right)
}{\left(  \frac{1}{2}\tilde{\tilde{r}}_{\Theta}\right)  ^{2}}.
\]
Observe that the gradients of the potential functions, or the heights of the
special Lagrangian graphs are kept uniformly bounded with respect to
$\varepsilon$ under the above three families of $U\left(  3\right)  $
rotations. Combined with (\ref{eigenPThm12Step3}), we obtain the desired
family of smooth solutions to (\ref{EsLag}) with $n=3$ and fixed $\Theta
\in\lbrack0,\pi/2).$ By symmetry, $-u^{\varepsilon}$ are the other family of
solutions to (\ref{EsLag}) with $n=3$ and fixed $\Theta\in(-\pi/2,0].$

\section{Minimal surface system: proof of Theorem 1.3}

In this section, we prove Theorem 1.3. Take the singular solutions $u^{m}$
from Theorem 1.1 with $\Theta=0$ and $m=2,3,4,\cdots.$ Let%
\[
U^{m}=Du^{m}.
\]
From Property 2.2 and Proposition 3.1, we see that%
\[
\left\vert DU^{m}\left(  y\right)  \right\vert =\left\vert D^{2}u^{m}\left(
y\right)  \right\vert \approx\frac{1}{\left\vert Du^{m}\left(  y\right)
\right\vert ^{2m-2}}.
\]
Here $``\approx"$ means two quantities are equivalent up to a multiple of
constant depending only on the dimension and $m.$ Then we have%
\begin{gather*}
\int_{B_{1}}\left\vert DU^{m}\left(  y\right)  \right\vert ^{p}dy\approx
\int_{B_{1}}\frac{1}{\left\vert Du^{m}\left(  y\right)  \right\vert ^{\left(
2m-2\right)  p}}dy\\
=\int_{Du^{m}\left(  B_{1}\right)  }\frac{1}{\left\vert x\right\vert ^{\left(
2m-2\right)  p}}\left\vert \det\left[  D^{2}u^{m}\left(  y\right)  \right]
^{-1}\right\vert dx\\
\approx\int_{Du^{m}\left(  B_{1}\right)  }\frac{1}{\left\vert x\right\vert
^{\left(  2m-2\right)  p}}\left\vert x\right\vert ^{2m-2}dx_{1}dx_{2}dx_{3}.
\end{gather*}
It follows that%
\[
U^{m}\in W^{1,p}\left(  B_{1}\right)  \ \text{\ for any }p<\frac{2m+1}%
{2m-2}\ \ \text{but\ \ }U^{m}\notin W^{1,\frac{2m+1}{2m-2}}\left(
B_{1}\right)  .
\]
We next show that $U^{m}$ satisfies (\ref{MSS}) in the integral sense, namely%
\[
\int_{B_{1}}\sum_{i,j=1}^{3}\sqrt{g}g^{ij}\left\langle \partial_{x_{i}%
}U,\partial_{x_{j}}\Phi\right\rangle dx=0
\]
for all $\Phi\in C_{0}^{\infty}\left(  B_{1},\mathbb{R}^{3}\right)  .$ This is
because the integrand is $0$ everywhere except at the origin and we have the
following bound on the integrand near the origin. Diagonalizing $D^{2}u^{m},$
we see that%
\begin{gather*}
\left\vert \sum_{i,j=1}^{3}\sqrt{g}g^{ij}\left\langle \partial_{x_{i}%
}U,\partial_{x_{j}}\Phi\right\rangle \right\vert =\left\vert \sum_{i=1}%
^{3}\sqrt{\left(  1+\lambda_{1}^{2}\right)  \cdots\left(  1+\lambda_{3}%
^{2}\right)  }\frac{\lambda_{i}}{1+\lambda_{i}^{2}}\partial_{i}\Phi
^{i}\right\vert \\
\leq C\left(  3,m\right)  \left\vert D^{2}u^{m}\right\vert \left\vert
D\Phi\right\vert =C\left(  3,m\right)  \left\vert DU^{m}\right\vert \left\vert
D\Phi\right\vert \in L^{1},
\end{gather*}
where we used again the fact (\ref{eigenvalue half pi}) that two of the
eigenvalues of $D^{2}u^{m}$ are bounded. The first part of the Theorem 1.3 is proved.

The second part of Theorem 1.3 is straightforward if we take $U^{\varepsilon
}=Du^{\varepsilon}$ with smooth solutions $u^{\varepsilon}$ in Theorem 1.2 for
any fixed $\Theta\in\left(  -\frac{\pi}{2},\frac{\pi}{2}\right)  .$


\begin{thebibliography}{999}                                                                                              %


\bibitem[BDM]{BDM}Bombieri, Enrico, De Giorgi, Ennio, and Miranda, Mario,
\emph{Una maggiorazione a priori relativa alle ipersuperfici minimali non
parametriche. }Arch. Rational Mech. Anal. \textbf{32} (1969), 255-267.

\bibitem[CNS]{CNS}Caffarelli, Luis, Nirenberg, Louis, and Spruck, Joel,
\emph{The Dirichlet problem for nonlinear second-order elliptic equations.
III. Functions of the eigenvalues of the Hessian.} Acta Math. \textbf{155}
(1985), 261--301.

\bibitem[CWY]{CWY}Chen, Jingyi, Warren, Micah, and Yuan, Yu, \emph{A priori
estimate for convex solutions to special Lagrangian Equations and its
application.} Comm. Pure Appl. Math. \textbf{62} (2009), no. 4, 583--595.

\bibitem[EL]{EL}Eisenbud, David and Levine, Harold I., \emph{An algebraic
formula for the degree of a C}$^{1}$\emph{ map germ}. Ann. Math. \textbf{106}
(1977), 19--44.

\bibitem[G]{G}Gregori, Giovanni, \emph{Compactness and gradient bounds for
solutions of the mean curvature system in two independent variables.} J. Geom.
Anal. \textbf{4} (1994), no. 3, 327--360.

\bibitem[HL1]{HL1}Harvey, Reese and Lawson, H. Blaine. Jr., \emph{Calibrated
geometry,} Acta Math. \textbf{148} (1982), 47--157.

\bibitem[HL2]{HL2}Harvey, F. Reese and Lawson, H. Blaine, Jr., \emph{Dirichlet
duality and the nonlinear Dirichlet problem.} Comm. Pure Appl. Math.
\textbf{62} (2009), no. 3, 396--443.

\bibitem[HL3]{HL3}Harvey, F. Reese and Lawson, H. Blaine, Jr., \emph{Dirichlet
duality and the non-linear Dirichlet problem on riemannian manifolds.} arXiv: 0907.1981.

\bibitem[NV]{NV}Nadirashvili, Nicolai and Vl\u{a}du\c{t}, Serge,
\emph{Singular solution to Special Lagrangian Equations. }Ann. Inst. H.
Poincar\'{e} Anal. Non Lin\'{e}aire (2010), doi:10.1016/j.anihpc.2010.05.001.

\bibitem[O]{O}Osserman, Robert, \emph{Properties of solutions of the minimal
surface equation in higher codimension.} Minimal submanifolds and geodesics
(Proc. Japan-United States Sem., Tokyo, 1977), 163--172, North-Holland,
Amsterdam-New York, 1979.

\bibitem[U]{U}Urbas, John I. E., \emph{Regularity of generalized solutions of
Monge-Amp\`{e}re equations.} Math. Z. \textbf{197} (1988), no. 3, 365--393.

\bibitem[W]{W}Wang, Mu-tao, \emph{Interior gradient bounds for solutions to
the minimal surface system.} Amer. J. Math. \textbf{126} (2004), no. 4, 921--934.

\bibitem[WY1]{WY1}Warren, Micah and Yuan, Yu, \emph{Hessian estimates for the
sigma-2 equation in dimension three.} Comm. Pure Appl. Math. \textbf{62}
(2009), no. 3, 305--321.

\bibitem[WY2]{WY2}Warren, Micah and Yuan, Yu, \emph{Explicit gradient
estimates for minimal Lagrangian surfaces of dimension two. }Math. Z.
\textbf{262} (2009), no. 4, 867--879.

\bibitem[WY3]{WY3}Warren, Micah and Yuan, Yu, \emph{Hessian and gradient
estimates for three dimensional special Lagrangian Equations with large phase.
} Amer. J. Math. \textbf{132} (2010), no. 3, 751--770.

\bibitem[Y1]{Y1}Yuan, Yu, \emph{A priori estimates for solutions of fully
nonlinear special Lagrangian equations.} Ann. Inst. H. Poincar\'{e} Anal. Non
Lin\'{e}aire \textbf{18} (2001), no. 2, 261--270.

\bibitem[Y2]{Y2}Yuan, Yu, \emph{A Bernstein problem for special Lagrangian
equations,} Invent. Math. \textbf{150} (2002), 117--125.

\bibitem[Y3]{Y3}Yuan, Yu, \emph{Global solutions to special Lagrangian
equations}, Proc. Amer. Math. Soc. \textbf{134} (2006), no. 5, 1355--1358.
\end{thebibliography}
\end{document}